\newtheorem{theorem}{\bf Theorem}[section]
\newtheorem{lemma}[theorem]{\bf Lemma}
\newtheorem{proposition}[theorem]{\bf Proposition}
\newtheorem{corollary}[theorem]{\bf Corollary}
\newtheorem{conjecture}[theorem]{\bf Conjecture}
\theoremstyle{remark}
\newtheorem{claim}{Claim}
\newcommand{\E}{\mathcal E}  
\author[C. Acciarri]{Cristina Acciarri}
\address{Cristina Acciarri:  Department of Mathematics, University of Brasilia,
Brasilia-DF, 70910-900 Brazil}
\email{acciarricristina@yahoo.it}
\author[P. Shumyatsky]{Pavel Shumyatsky}
\address{Pavel Shumyatsky: Department of Mathematics, University of Brasilia, 
70910-900 Bras\'ilia DF, Brazil}
\email{pavel@unb.br}
\keywords{Profinite groups, Engel condition}
\subjclass[2010]{20D10, 20D45, 20F45, 20E18}
\thanks{This work was supported by the Conselho Nacional de Desenvolvimento Cient\'{\i}fico e Tecnol\'ogico (CNPq),  and Funda\c c\~ao de Apoio \`a Pesquisa do Distrito Federal (FAPDF), Brazil.}
\title[Cyclic Engel sinks]{On groups in which Engel sinks are cyclic}
\begin{document}
\begin{abstract} For an element $g$ of a  group $G$, an Engel sink is a subset  $\E(g)$  such that for every $ x\in G $ all sufficiently long commutators $ [x,g,g,\ldots,g] $ belong to $\E(g)$. We conjecture that if $G$ is a profinite group  in which  every element admits a sink that  is a  procyclic subgroup, then $G$ is procyclic-by-(locally nilpotent). We prove the conjecture in two cases -- when $G$ is a finite group, or a soluble pro-$p$ group.
\end{abstract}

\maketitle

\section{Introduction}

\baselineskip18pt

A group $G$ is called an \emph{Engel group} if for every $x,g\in G$ the equation $[x,g,g,\dots , g]=1$ holds, where $g$ is repeated in the commutator sufficiently many times depending on $x$ and $g$. (Throughout the paper, we use the left-normed simple commutator notation
$[a_1,a_2,a_3,\dots ,a_r]=[...[[a_1,a_2],a_3],\dots ,a_r]$.)
Of course, any nilpotent group is an Engel group. For finite groups the converse is also known to be true: a finite Engel group is nilpotent by Zorn's theorem \cite{zorn}.  
Given arbitrary elements $x,g$ in a group $G$, here and in what follows,  for any $n\geq 1$, we will denote  by  $[x,_{\,n} g]$ the commutator of the form $[x,\underset{n}{\underbrace{g,\ldots,g}}]$.

Recently, groups that are `almost Engel' in the sense of restrictions on so-called Engel sinks were given some attention. An Engel sink of an element $g\in  G$ is a set ${\E}(g)$ such that for every $x\in G$ all sufficiently long commutators $[x,g,g,\dots ,g]$ belong to ${\E}(g)$, that is, for every $x\in G$ there is a positive integer $n(x,g)$ such that  \begin{equation*}\label{sink} 
[x,_n g]\in {\E}(g)\qquad \text{for all }n\geq n(x,g).
\end{equation*}
Engel groups are precisely the groups for which we can choose ${\E}(g)=\{ 1\}$ for all $g\in G$. In \cite{khu-shu162} finite, profinite, and compact groups in which every element has a finite Engel sink were considered. It was proved that compact groups with this property are finite-by-(locally nilpotent). Similar result for linear groups was established in \cite{mona18} (see also \cite{wehr} for a shorter proof). Recall that a group $G$ is locally nilpotent if every finitely generated subgroup of $G$ is nilpotent. According to an important theorem, due to Wilson and Zelmanov \cite{wi-ze}, a profinite group is locally nilpotent if and only if it is Engel.

In \cite{glasgo} finite groups in which there is a bound for the ranks of the subgroups generated by  Engel sinks were considered. Recall that the rank of a finite group is the minimum number $r$ such that every subgroup can be generated by $r$ elements. It was shown that if $G$ is a finite group such that for every $g\in G$ the Engel sink ${\E}(g)$ generates a subgroup of rank $r$, then the rank of $\gamma_{\infty}(G)$ is bounded in terms of $r$. Here $\gamma_\infty(G)$ stands for the intersection of all terms of the lower central series of $G$.

The goal of this article is to establish some substantial evidence in favor of the following conjecture.

\begin{conjecture}\label{ups} Let $G$ be a profinite group in which every element admits an Engel sink that generates a procyclic subgroup. Then $G$ is procyclic-by-(locally nilpotent).
\end{conjecture}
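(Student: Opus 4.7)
The plan is to deduce the full profinite conjecture from the two proved special cases via a chain of reductions. First, I would observe that the procyclic-Engel-sink hypothesis descends to every continuous quotient $\phi \colon G \to H$, since $\phi(\E(g))$ is an Engel sink of $\phi(g)$ and continuous homomorphisms preserve procyclicity. By the finite case already proved, every finite continuous quotient of $G$ is cyclic-by-nilpotent, hence soluble. Writing $G$ as an inverse limit of its finite continuous quotients shows that $G$ is prosolvable, and so the problem reduces to the prosolvable setting.

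In a prosolvable profinite group, Sylow theory produces a Sylow pro-$p$ subgroup $P_p$ for each prime $p$, unique up to conjugacy. Each $P_p$ inherits the hypothesis, and the intended application is the proved soluble pro-$p$ case. The gap here is that pro-$p$ subgroups of prosolvable groups need not have finite derived length. To bridge this, I would combine the Khukhro--Shumyatsky theorem from \cite{khu-shu162} (finite Engel sinks force finite-by-(locally nilpotent) structure in compact groups) with the procyclic refinement provided by the hypothesis: a procyclic subgroup of a pro-$p$ group is either finite cyclic or isomorphic to $\mathbb{Z}_p$, so each sink fits within the Khukhro--Shumyatsky framework. One would then upgrade the finite-by-(locally nilpotent) decomposition of $P_p$ to a procyclic-by-(locally nilpotent) one by analyzing the Frattini quotient and invoking Wilson--Zelmanov \cite{wi-ze}.

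The last step is to assemble the local procyclic normal subgroups $N_p \trianglelefteq P_p$ into a single closed normal procyclic subgroup $N \leq G$ with $G/N$ Engel. In the prosolvable setting, coprime procyclic pro-$p$ subgroups for different primes should combine into a profinite procyclic group of the form $\prod_p \mathbb{Z}_p^{\epsilon_p}$, whose corresponding quotient is Engel and hence locally nilpotent by Wilson--Zelmanov. The main obstacle I expect is precisely this patching step: the $N_p$ must be chosen canonically and compatibly under conjugation in order to produce a genuinely normal $N$, and the assembled subgroup must simultaneously contain enough ``sink material'' to kill all Engel obstructions in $G/N$ while remaining procyclic. A related structural difficulty is that the assignment $g \mapsto \overline{\langle \E(g) \rangle}$ is not coherent a priori across different elements; identifying a single canonical closed procyclic normal subgroup of $G$ that eventually absorbs every Engel sink is, I suspect, the true heart of the conjecture, and the step where one would need input beyond what is supplied by the two special cases alone.
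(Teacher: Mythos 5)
First, a point of calibration: the statement you are proving is stated in the paper as a \emph{conjecture}, and the paper does not prove it; it only establishes the finite case (Theorem \ref{mainf}) and the soluble pro-$p$ case (Theorem \ref{main}). So there is no ``paper proof'' to match, and your proposal, which you yourself present as a plan with an unresolved ``heart,'' is not a proof. The honest self-assessment at the end is accurate, but two of the steps you treat as bridgeable are actually broken, not merely incomplete.

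The concrete problems are these. (1) Your appeal to the Khukhro--Shumyatsky theorem of \cite{khu-shu162} is inapplicable: that result requires every element to have a \emph{finite} Engel sink, whereas here a sink may be an infinite procyclic group isomorphic to $\mathbb{Z}_p$. The sentence claiming that such a sink ``fits within the Khukhro--Shumyatsky framework'' is simply false, so the proposed bridge from prosolubility to the soluble pro-$p$ theorem does not exist; a Sylow pro-$p$ subgroup of a prosoluble group need not have finite derived length, and Theorem \ref{main} genuinely needs solubility. (2) More fundamentally, the strategy of assembling the normal procyclic subgroup $N$ from subgroups $N_p\trianglelefteq P_p$ found inside the Sylow subgroups cannot work even in principle. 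Take $G=S_3$: every minimal Engel sink is the cyclic group $A_3$, both Sylow subgroups are cyclic (hence locally nilpotent with $N_2=N_3=1$), yet $G$ is not locally nilpotent and the required normal cyclic subgroup is $A_3=\gamma_\infty(G)$, which is invisible to the internal structure of the individual Sylow subgroups. The subgroup one needs is produced by the \emph{action} of elements of one Sylow subgroup on another (this is exactly what the towers $P_1,P_2,P_3$ and Lemma \ref{gama_inf} capture in the paper's proof of Theorem \ref{mainf}), not by a quotient-by-locally-nilpotent decomposition of each $P_p$ separately. Any successful attack on the conjecture has to control these cross-prime commutators directly, which is the content neither of your reduction nor, as the authors concede, of the two theorems they prove.
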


First, we consider finite groups in which all elements admit Engel sinks generating cyclic subgroups.

\begin{theorem}\label{mainf} Let $G$ be a finite group in which every element admits an Engel sink generating a cyclic subgroup. Then $\gamma_{\infty}(G)$ is cyclic.
\end{theorem}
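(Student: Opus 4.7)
The plan is to argue by induction on $|G|$; the base case $G$ nilpotent is trivial since then $\gamma_\infty(G) = 1$. Set $R = \gamma_\infty(G)$ and assume $R \neq 1$. A useful preliminary observation is that because $[x,_n g] \in \gamma_{n+1}(G)$ for all $x, g \in G$ and the lower central series of the finite group $G$ stabilizes to $R$, the minimal Engel sink of every $g \in G$ lies inside $R$; being contained in the cyclic $\langle \E(g)\rangle$ from the hypothesis, it still generates a cyclic subgroup of $R$. The hypothesis is clearly inherited by sections, so the inductive conclusion may be applied to $G/K$ for any nontrivial normal subgroup $K$ of $G$.

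First I would show that $G$ is soluble. If not, $G$ has a nonabelian finite simple section $T$, which by section-heredity itself satisfies the cyclic Engel sink condition. Appealing to the classification of finite simple groups together with a case analysis of iterated commutators in $T$, one produces $g, x \in T$ whose Engel sink generates a noncyclic subgroup; this contradiction forces $G$ to be soluble.

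Let $N$ be a minimal normal subgroup of $G$ contained in $R$. By solubility, $N$ is elementary abelian of exponent $p$ for some prime $p$ and is irreducible as a $G$-module over the field of order $p$. By induction, $R/N$ is cyclic. Since $N$ is abelian, the map $n \mapsto [n,g]$ on $N$ coincides with the linear operator $g - 1$, and one checks that $\E(g)\cap N$ equals the largest subspace of $N$ on which $g-1$ is bijective. This subspace lies inside the cyclic group $\langle \E(g)\rangle$, so its order is at most $p$. Taking $g$ of order prime to $p$, semisimplicity of the action identifies the invertible part with $[N,g]$, giving $|[N,g]|\le p$ for every $p$-regular $g \in G$. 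Combining this rank-one constraint over a Hall $p'$-subgroup of $G$ with the irreducibility of $N$ as a $G$-module --- via a Clifford-type or pseudo-reflection analysis --- forces $\dim N = 1$, i.e.\ $N \cong C_p$.

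With $|N| = p$ and $R/N$ cyclic, I finally deduce that $R$ is itself cyclic. A short argument shows $[N,R] = 0$, so $N \subseteq Z(R)$ and $R$ is abelian (since $R/N$ is cyclic). If $R$ were not cyclic it would contain a rank-two elementary abelian $p$-subgroup, and one could exhibit $g \in G$ acting on $R$ with non-nilpotent $g - 1$ whose bijective part is non-cyclic, contradicting the hypothesis. The main obstacle I anticipate is the dimension bound $\dim N = 1$ in the previous step: translating a per-element rank-one constraint on $g - 1$ into the global conclusion $\dim N = 1$ requires a delicate analysis of how $p$-regular and $p$-singular elements of $G$ interact on the irreducible module $N$, and is the technical heart of the argument.
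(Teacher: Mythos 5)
Your outline has two genuine gaps at exactly the points you flag as needing work, and neither is a routine detail. First, solubility: ``appealing to CFSG plus a case analysis of iterated commutators in a simple section'' is not an argument, and carrying it out would be a substantial project. The paper gets solubility in two short steps: by Frobenius's normal $p$-complement criterion it suffices to show that an element $x$ of odd order normalizing a $2$-subgroup $H$ centralizes it; since $\langle H\cap\E(x)\rangle$ is a cyclic $2$-group and a coprime (odd-order) automorphism of a cyclic $2$-group is trivial, $x$ is Engel on $H$, hence $[H,x]=1$. So $G$ has a normal $2$-complement and Feit--Thompson finishes. Second, and more seriously, the step $\dim N=1$ is the heart of the theorem and you leave it open. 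The per-element bound $|[N,g]|\le p$ does not force $\dim N=1$: the lines $[N,g]$ vary with $g$ and together they must generate all of $N$ (since $N=[N,H]$ for a Hall $p'$-subgroup $H$), so a ``pseudo-reflection'' picture by itself gives nothing. The paper's resolution is a specific trick: if the acting nilpotent group $H$ has a noncyclic abelian subgroup $A$, pick $a_1\in A$ nontrivial and then $a_2\ne 1$ in $C_A([P,a_1])$ (which is nontrivial because $A/C_A([P,a_1])$ embeds in the cyclic automorphism group of a cyclic $p$-group); then $a_1$ and $a_2$ centralize each other's commutator subgroups, so $[P,a_1][P,a_2]\le[P,a_1a_2]$ is a noncyclic sink. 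Hence every abelian subgroup of $H$ is cyclic, $H\cong Q\times C$ with $Q$ generalized quaternion, and the unique involution argument yields the contradiction. Your proposal contains no substitute for this.

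There are also smaller unproved assertions: the claim $[N,R]=1$ is not automatic, since $R=\gamma_\infty(G)$ need not be nilpotent ($[N,R]$ is normal in $G$, so it is $1$ or $N$, and the case $[N,R]=N$ must be excluded). The paper avoids this by splitting into the metanilpotent case (handled as above) and the case of Fitting height $\ge 3$, which it kills with a tower $P_1,P_2,P_3$: $P_3=\gamma_\infty(P_3P_2)$ is cyclic by the metanilpotent case, so $P_2P_1$ induces an abelian automorphism group on $P_3$, forcing $P_2=\gamma_\infty(P_2P_1)$ to centralize $P_3$, contradicting $P_3=\gamma_\infty(P_3P_2)$. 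Your induction on $|G|$ with a minimal normal subgroup is a reasonable alternative skeleton, but as written the proof is incomplete at its two critical junctures.
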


Recall that a profinite group is a topological group that is isomorphic to an inverse limit of finite groups. The reader is referred to textbooks  \cite{riza} and  \cite{wi} for background information on profinite groups. In the context of such groups all the usual concepts of groups theory are interpreted topologically. In particular, by a subgroup of a profinite group we always mean a closed subgroup. The next result deals with soluble pro-$p$ groups in which every element admits an Engel sink generating a procyclic subgroup.

\begin{theorem}\label{main} Let $G$ be a soluble pro-p group in which every element admits an Engel sink generating a procyclic subgroup. Then $G$ has a normal procyclic subgroup $K$ such that $G/K$ is locally nilpotent.
\end{theorem}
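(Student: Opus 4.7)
The plan is to proceed by induction on the derived length $d$ of $G$. The base case $d=1$ is trivial: when $G$ is abelian it is already locally nilpotent, so $K=1$ works.

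For the inductive step with $d\geq 2$, let $A=G^{(d-1)}$ be the last nontrivial term of the derived series, a closed abelian normal subgroup of $G$. The quotient $G/A$ is soluble pro-$p$ of derived length $d-1$ and inherits the hypothesis, since the continuous image of $\E(g)$ in $G/A$ is an Engel sink for $gA$ generating a procyclic subgroup. By induction there is a closed normal subgroup $M$ of $G$ with $A\leq M$ such that $M/A$ is procyclic and $G/M$ is locally nilpotent.

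The core of the proof is a module-theoretic analysis of $A$ as a $\mathbb{Z}_p[G]$-module. For each $g\in G$ define the continuous $\mathbb{Z}_p$-linear endomorphism $\phi_g(a):=[a,g]$, so that $\phi_g^n(a)=[a,_{\,n} g]$. The hypothesis says that for every $a\in A$, $\phi_g^n(a)$ eventually lies in the procyclic closed subgroup $A\cap C_g$, where $C_g:=\overline{\langle\E(g)\rangle}$. The closed subgroups $H_n(g):=\{a\in A:\phi_g^n(a)\in A\cap C_g\}$ then form an ascending chain whose union is $A$; a Baire-category argument forces some $H_n(g)$ to have nonempty interior and hence be open of finite index in $A$. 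Exploiting this uniformly in $g$, I would extract a single $G$-invariant closed procyclic subgroup $A_0\subseteq A$ such that each element of $G$ acts on $A/A_0$ as an Engel operator. Moreover, applying the hypothesis to elements $g\in G\setminus M$ should force the action of $G$ on the procyclic group $M/A$ to factor through the pro-$p$ subgroup $1+p\mathbb{Z}_p$ of $\mathrm{Aut}(M/A)$, yielding an Engel action on $M/A$ as well.

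With $K:=A_0$ in hand, the quotient $G/K$ has $A/K$ abelian normal with Engel $G$-action; $M/K$ is an extension of the procyclic $M/A$ by $A/K$ on which $G/K$ acts Engelly; and $(G/K)/(M/K)\cong G/M$ is locally nilpotent. Combining these layers, every element of $G/K$ is Engel, so by the Wilson--Zelmanov theorem $G/K$ is locally nilpotent. The main obstacle is the module-theoretic extraction of $A_0$ together with the verification that $G$ acts on $M/A$ through $1+p\mathbb{Z}_p$: both require showing that the family of procyclic subgroups $\{A\cap C_g\}_{g\in G}$ and the allowed automorphisms of the procyclic layer $M/A$ are sufficiently restricted by the sink hypothesis to collapse into a single procyclic normal $K$ in $G$.
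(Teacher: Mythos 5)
Your overall frame (induction on the derived length, Baire category applied to the sets of elements whose iterated commutators have entered the sink) is in the same spirit as the paper's, but the proposal has a fatal error in the treatment of the middle layer $M/A$. You assert that the action of $G$ on the procyclic group $M/A$ factors through $1+p\mathbb{Z}_p$ and that this "yields an Engel action". The first part is automatic (a pro-$p$ group acting on $\mathbb{Z}_p$ always acts through the Sylow pro-$p$ subgroup of $\mathrm{Aut}(\mathbb{Z}_p)$), but the second is false: if $b\in 1+p\mathbb{Z}_p$ and $b\neq 1$, then $(b-1)^n\neq 0$ for every $n$, so a nontrivial action of this kind is never Engel --- this is precisely the content of Lemma \ref{nnilpact} in the paper, which says that $[K,_{\,i}\,a]$ has \emph{finite index} in $K$ for all $i$ whenever $a$ acts nontrivially. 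Consequently your choice $K=A_0\subseteq A$ cannot work in general: $G/A_0$ surjects onto $G/A$, and $G/A$ need not be locally nilpotent (already for derived length $3$, where $G/A$ can be a split extension of $\mathbb{Z}_p$ by $\mathbb{Z}_p$ with a generator acting by multiplication by $1+p$). The procyclic normal subgroup $K$ of the theorem must absorb a finite-index piece of the procyclic obstruction $M/A$ produced by the inductive step; it cannot sit inside the last term of the derived series. The paper's argument is structured around exactly this point: it first proves (Claim \ref{1115}) that $G'$ is locally nilpotent, using that the procyclic normal subgroup coming from induction is centralized by $G'$ because $\mathrm{Aut}(\mathbb{Z}_p)$ is abelian, and then constructs $K$ inside a nilpotent normal subgroup $S$ with $S/R$ procyclic, not inside $R$.

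The second problem is the step you yourself flag as "the main obstacle": passing from one procyclic sink per element to a single $G$-invariant procyclic subgroup. This is the actual content of the theorem, and the proposal offers no argument for it. The paper achieves uniformity with two specific devices you would need to reproduce: (i) if $\E(a)\cap\E(b)=1$ for two elements with infinite sinks, then every $E_n(ab)$ contains a subgroup isomorphic to $\mathbb{Z}_p\oplus\mathbb{Z}_p$, contradicting procyclicity of $\E(ab)$, so the infinite sinks pairwise intersect in finite-index subgroups; and (ii) the Baire-category Lemma \ref{lemma_subgr}, showing that a procyclic subgroup $K$ with $K\cap K^x\neq1$ for all $x\in G$ contains a nontrivial finite-index subgroup normal in $G$. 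In addition, the reduction to finitely generated situations (Claims \ref{1116} and \ref{1117}, resting on Lemmas \ref{ord1}, \ref{ord2} and \ref{3333}) is needed to make the intersections above range over a manageable family. Without these ingredients the "uniform extraction" of $A_0$ does not go through.
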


In the next section we deal with the proof of Theorem \ref{mainf}. The proof of Theorem \ref{main} is given in Section 3. 

\section{Proof of Theorem \ref{mainf}}

We start  with a  collection of  well-known facts about coprime  automorphisms that we will use throughout the article. Given a group $G$ acted on by a group $A$ we write $C_G(A)$ for the subgroup of fixed points of $A$ in $G$ and $[G,A]$ for the subgroup generated by all elements of the form $x^{-1}x^a$, where $x\in G$ and $a\in A$.

\begin{lemma}\label{cc}
Let  $A$ be a group of automorphisms of a finite group $G$ such that $(|G|,|A|)=1$. Then
\begin{enumerate}
\item[(i)] $G=C_{G}(A)[G,A]$;
\item[(ii)] $[G,A,A]=[G,A]$; 
\item[(iii)] $C_{G/N}(A)=C_G(A)N/N$ for any $A$-invariant normal subgroup $N$ of $G$;
\item[(iv)] If $G$ is cyclic of prime-power order, then $A$ is cyclic;
\item[(v)] If $G$ is cyclic of $2$-power order, then $A=1$.
\end{enumerate}
\end{lemma}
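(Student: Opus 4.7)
The plan is to prove the five items in the order (iii), (i), (ii), (iv), (v): item (i) invokes (iii) in its inductive step, (ii) is a formal consequence of (i), and (v) is a special case of the reasoning for (iv). All five are classical (see, e.g., Aschbacher, \emph{Finite Group Theory}, or Isaacs, \emph{Finite Group Theory}), so only the key ideas are sketched.

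For (iii), given an $A$-invariant normal subgroup $N$ of $G$ and $gN \in C_{G/N}(A)$, the assignment $a \mapsto g^{-1}g^a$ is a $1$-cocycle of $A$ with values in $N$. The coprimality hypothesis $(|A|,|N|)=1$ makes the Schur--Zassenhaus theorem applicable inside the semidirect product of $N$ by $A$: any two complements to $N$ are $N$-conjugate, which translates into the statement that every such cocycle is a coboundary. Thus there is $n \in N$ with $g^{-1}g^a = n^{-1}n^a$ for every $a \in A$, whence $gn^{-1} \in C_G(A)$ represents the coset $gN$. The reverse inclusion is immediate. Part~(i) then follows by induction on $|G|$: if a proper nontrivial $A$-invariant normal subgroup $N$ exists, apply induction to $N$ and (iii) to $G/N$ and combine; otherwise $G$ is a characteristically simple $A$-module (elementary abelian), and Maschke's theorem in coprime characteristic yields $G = C_G(A) \oplus [G,A]$.

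Part~(ii) is purely formal. Using (i), every generator $[g,a]$ of $[G,A]$ can be written as $[ch,a]$ with $c \in C_G(A)$ and $h \in [G,A]$, and the commutator identity $[ch,a] = [c,a]^h [h,a] = [h,a]$ shows it already lies in $[[G,A],A]$. For (iv) and (v), note that $A$ embeds into $\operatorname{Aut}(G)$, which has order $\phi(p^n) = p^{n-1}(p-1)$. For odd $p$ this group is cyclic, so any subgroup of it is cyclic, and hence $A$ is cyclic. For $p=2$, $\operatorname{Aut}(G)$ is itself a $2$-group; since $|A|$ must be coprime to $|G|=2^n$ and hence odd, $A=1$, giving (v) (and (iv) trivially in that case).

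I do not foresee a genuine obstacle. The only delicate point is the cocycle translation in (iii) when $N$ is nonabelian: one must invoke Schur--Zassenhaus for conjugacy of complements rather than ordinary group cohomology, which handles only abelian coefficients.
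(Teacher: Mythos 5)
The paper offers no proof of this lemma at all: it is stated as ``a collection of well-known facts about coprime automorphisms'' and used as a black box, so there is no in-paper argument to compare yours against. Your sketches are the standard textbook proofs and are essentially correct: the Schur--Zassenhaus/conjugacy-of-complements argument for (iii) (correctly flagged as needed in place of abelian cohomology when $N$ is nonabelian), the commutator identity for (ii), and the structure of $\operatorname{Aut}(C_{p^n})$ for (iv) and (v) (cyclic for odd $p$, a $2$-group for $p=2$, whence a coprime $A$ is trivial) are all fine. One inaccuracy: in the base case of your induction for (i), a finite group with no proper nontrivial $A$-invariant normal subgroup need \emph{not} be elementary abelian (it could be a direct product of isomorphic nonabelian simple groups), so the appeal to Maschke is not available in general. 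The base case is nevertheless immediate without it: $[G,A]$ is itself an $A$-invariant normal subgroup, so minimality forces $[G,A]=1$ or $[G,A]=G$, and either way $G=C_G(A)[G,A]$. In fact the whole induction for (i) can be avoided: since $A$ acts trivially on $G/[G,A]$ by definition, part (iii) applied with $N=[G,A]$ gives $G/[G,A]=C_G(A)[G,A]/[G,A]$ in one line, which is the usual derivation and the one you may as well use.
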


The assumption of coprimeness   is unnecessary  in  the following lemma.

\begin{lemma}\label{cyc} Let $G$ be  a cyclic group. The group of automorphisms of $G$ is abelian.
\end{lemma}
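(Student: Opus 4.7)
The plan is to exploit the fact that any homomorphism out of a cyclic group is completely determined by the image of a single generator, and then to compute compositions of two automorphisms directly on that generator.

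First I would fix a generator $g$ of $G$. For every $\varphi\in\mathrm{Aut}(G)$, the image $\varphi(g)$ again generates $G$, and since every element of $G$ is of the form $g^m$, there exists an integer $k(\varphi)$ (uniquely determined modulo the order of $g$, in the finite case) such that $\varphi(g)=g^{k(\varphi)}$. Because $\varphi$ is a homomorphism, one then has $\varphi(g^m)=g^{mk(\varphi)}$ for every integer $m$. In particular, $\varphi$ is entirely encoded by $k(\varphi)$.

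Next I would take two automorphisms $\varphi_1,\varphi_2\in\mathrm{Aut}(G)$ and evaluate both compositions on the generator:
\[
\varphi_1\varphi_2(g)=\varphi_1(g^{k(\varphi_2)})=g^{k(\varphi_1)k(\varphi_2)},\qquad \varphi_2\varphi_1(g)=g^{k(\varphi_2)k(\varphi_1)}.
\]
Since multiplication of integers is commutative, the two right-hand sides coincide. As $\varphi_1\varphi_2$ and $\varphi_2\varphi_1$ agree on the generator $g$, they agree on all of $G$, so $\varphi_1\varphi_2=\varphi_2\varphi_1$ and $\mathrm{Aut}(G)$ is abelian. Equivalently, the map $\varphi\mapsto k(\varphi)$ identifies $\mathrm{Aut}(G)$ with a subgroup of the multiplicative monoid of $\mathbb{Z}$ (or of $\mathbb{Z}/n\mathbb{Z}$ if $|G|=n$), namely $\{\pm 1\}$ in the infinite cyclic case and $(\mathbb{Z}/n\mathbb{Z})^{*}$ in the finite case; both are abelian.

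There is no real obstacle here: the only minor point to be mindful of is to handle the finite and infinite cyclic cases uniformly, which is taken care of by reading the exponent $k(\varphi)$ modulo the order of $g$ when $G$ is finite and as an honest integer in $\{\pm 1\}$ when $G$ is infinite. No appeal to coprimeness or to Lemma \ref{cc} is required.
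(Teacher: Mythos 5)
Your proof is correct; the paper states this lemma without proof as a well-known fact, and your argument (every automorphism sends a fixed generator $g$ to some power $g^{k(\varphi)}$, so composition corresponds to multiplication of the exponents, which is commutative) is exactly the standard justification one would supply, covering both the finite and infinite cyclic cases.
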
  

Recall that a normal subgroup $N$ of a finite group $G$ is a normal $p$-complement (for a prime $p$) if $N=O_{p'}(G)$ and $G/N$ is a $p$-group. The well-known theorem of Frobenius states that $G$ possesses a normal $p$-complement if and only if $N_G(H)/C_G(H)$ is a $p$-group for every nontrivial $p$-subgroup $H$ of $G$ (see \cite[Theorem 7.4.5]{go}).

Obviously, in a finite group $G$ every element has the smallest Engel sink, so throughout this section, we use the term Engel sink for the minimal Engel sink, denoted by ${\E}(g)$, of an element $g\in G$. 

\begin{lemma}\label{111} Let $G$ be a finite group in which for each $g\in G$ the Engel sink ${\E}(g)$ generates a cyclic subgroup. Then $G$ has a normal $2$-complement. 
\end{lemma}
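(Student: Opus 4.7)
I plan to argue by contradiction using the Frobenius normal $p$-complement theorem cited in the paper, with $p=2$. If $G$ has no normal $2$-complement, there exist a non-trivial $2$-subgroup $H$ of $G$ and an element $a\in N_G(H)$ of odd order with $[H,a]\ne 1$.

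First I would check that the hypothesis on cyclic Engel sinks is inherited both by subgroups and by quotients: for $g\in K\le G$ the eventual cycles of $x\mapsto[x,g]$ starting from $x\in K$ form a subset of those starting from $x\in G$, giving $\E_K(g)\subseteq\E_G(g)$; and for $N\trianglelefteq G$ the image of $\E_G(g)$ in $G/N$ is an Engel sink of $gN$ there, so $\E_{G/N}(gN)$ is contained in it. In both situations the generated subgroup stays cyclic. Using this I can replace $G$ by $H\langle a\rangle$ and then by $[H,a]\langle a\rangle$, observing via Lemma \ref{cc}(ii) that $[[H,a],a]=[H,a]\ne 1$; this reduces matters to $G=H\langle a\rangle$ with $H=[H,a]\ne 1$. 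Finally, since $\Phi(H)$ is $\langle a\rangle$-invariant and hence normal in $G$, passing to $G/\Phi(H)$ lets me further assume that $H$ itself is elementary abelian.

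The key step is then a short coprime-action calculation. Since $H$ is now an abelian $2$-group and $\langle a\rangle$ acts on it coprimely, the standard direct decomposition $H=C_H(a)\oplus[H,a]$ combined with $H=[H,a]$ yields $C_H(a)=1$. Because $H$ is abelian, the map $\phi\colon H\to H$ defined by $\phi(v)=[v,a]$ is a group homomorphism with kernel $C_H(a)=1$, so it is a permutation of the finite group $H$. Every $v\in H$ therefore lies on a finite $\phi$-cycle; each element of that cycle equals $\phi^n(v)=[v,_{\,n}a]$ for infinitely many $n$, and so belongs to the minimal Engel sink $\E(a)$. Hence $H\subseteq\E(a)$, so all of $H$ is contained in the cyclic subgroup generated by $\E(a)$.

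This produces the desired contradiction. An elementary abelian $2$-group that is cyclic has order at most $2$: the case $|H|=1$ contradicts $H=[H,a]\ne 1$, while $|H|=2$ forces $\langle a\rangle$ to act trivially on $H$ by Lemma \ref{cc}(v), again contradicting $H=[H,a]\ne 1$. The only delicate point in the argument is the verification of how Engel sinks behave under passage to subgroups and quotients; after those reductions the coprime-action calculation above finishes the proof.
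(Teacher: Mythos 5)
Your proof is correct, but after the common starting point (Frobenius's theorem producing an odd-order element acting nontrivially on a $2$-subgroup $H$) it follows a genuinely different route from the paper's. The paper stays in the original group: it sets $E=H\cap\E(x)$, observes that $\langle E\rangle$ is a cyclic $2$-group normalized by the odd-order element $x$, applies Lemma \ref{cc}(v) to conclude that $x$ centralizes $\langle E\rangle$, and deduces that $x$ is Engel in $H\langle x\rangle$, whence $[H,x]=1$. You instead verify that the hypothesis passes to subgroups and quotients, reduce to $G=[H,a]\langle a\rangle$ with $H$ elementary abelian via the Frattini quotient, and then show that $v\mapsto[v,a]$ is a bijection of $H$ (kernel $C_H(a)=1$ by the coprime decomposition), so every element of $H$ recurs infinitely often among the $[v,_{\,n}a]$ and hence $H\subseteq\E(a)$; cyclicity then forces $|H|\le 2$ and triviality of the action. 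The paper's version is shorter and needs no quotient reduction, but it leaves implicit the step from ``$x$ is Engel on $H$'' to ``$[H,x]=1$'' (a Baer/Zorn-type fact combined with coprimeness); your version makes every step explicit at the cost of the preliminary reductions, and your observation that all of $H$ lands inside the minimal sink is a pleasant dual to the paper's observation that the part of the sink inside $H$ must be centralized. Both arguments are sound.
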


\begin{proof} Suppose that this is false. Then $G$ has an element $x$ of odd order and a 2-subgroup $H$ such that $x$ normalizes but not centralizes $H$. Let $E=H\cap \E(x)$. Observe that $x$ normalizes $\langle E\rangle$. In view of Lemma \ref{cc}(v), we deduce that $x$ centralizes $\langle E\rangle$. Therefore for every $h\in H$ we have $[h,x,x,\dots,x]=1$ if $x$ is repeated in the commutator sufficiently many times. In other words, $x$ is Engel in the group $H\langle x\rangle$ and we deduce that $[H,x]=1$. This yields a contradiction.
\end{proof}

In view of the Feit-Thompson Theorem on solubility of groups of odd order \cite{fetho} the following corollary is straightforward.

\begin{corollary}\label{112} Let $G$ be a finite group in which the Engel sink ${\E}(g)$ generates a cyclic subgroup for each $g\in G$. Then $G$ is soluble.
\end{corollary}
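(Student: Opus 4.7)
The plan is to derive the corollary in a single short step by combining Lemma \ref{111} with the Feit--Thompson odd order theorem \cite{fetho}. First I would apply Lemma \ref{111} to obtain a normal $2$-complement $N = O_{2'}(G)$, so that $N$ has odd order and $G/N$ is a $2$-group. Since $N$ has odd order, Feit--Thompson immediately yields that $N$ is soluble. The quotient $G/N$, being a finite $2$-group, is nilpotent, hence also soluble. An extension of a soluble normal subgroup by a soluble quotient is soluble, so $G$ itself is soluble.

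There is no real obstacle here: all the genuine work sits inside Lemma \ref{111}, which already used Lemma \ref{cc}(v) to rule out the action of an odd-order element nontrivially normalizing a $2$-subgroup. The corollary is just the observation that the existence of a normal $2$-complement, combined with the odd-order theorem, is enough to conclude solubility. One does not even need to verify that the hypothesis on cyclic Engel sinks passes to subgroups or quotients, since the soluble-by-soluble argument applies directly in $G$.

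The only alternative approach I can think of would be to avoid invoking Feit--Thompson and instead try to show directly that $G$ contains no non-abelian simple section, by analysing which finite simple groups can satisfy the cyclic Engel sink condition. This would be considerably more involved and ultimately unnecessary, since Lemma \ref{111} has already reduced the question of solubility to the odd-order case where Feit--Thompson is tailor-made. I would therefore settle for the short proof outlined above.
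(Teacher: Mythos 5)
Your proof is correct and is precisely the argument the paper intends: the paper states the corollary is ``straightforward'' from Lemma \ref{111} combined with the Feit--Thompson theorem, which is exactly the normal $2$-complement plus soluble-by-soluble extension argument you give. No differences to report.
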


Recall that a group $G$ is metanilpotent if it has a normal subgroup $N$ such that both $N$ and $G/N$ are nilpotent. It is easy to see that a finite group $G$ is metanilpotent if and only if $\gamma_{\infty}(G)$ is nilpotent. The next result is well known (see for example  \cite[Lemma 2.4]{AST} for the proof).

\begin{lemma}\label{gama_inf} Let $G$ be a finite metanilpotent group. Assume that $P$ is a Sylow $p$-subgroup of $\gamma_{\infty}(G)$ and $H$ is a Hall $p'$-subgroup of $G$. Then $P=[P,H]$.
\end{lemma}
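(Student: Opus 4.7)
The plan is to reduce to the case where $P$ is elementary abelian by a Frattini-type argument, and then exploit the decomposition $G = SH$ (with $S$ a Sylow $p$-subgroup of $G$ containing $P$) together with the identity $N = [N, G]$ to push the problem into a $p$-group, where nilpotency finishes it off.

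First I would set up the structure. Write $N = \gamma_\infty(G)$, which is nilpotent by assumption, and decompose $N = P \times Q$ with $Q$ the Hall $p'$-subgroup of $N$; both factors are characteristic in $N$, hence normal in $G$. Since $G/N$ is nilpotent its Hall $p'$-subgroup $HN/N$ is normal in $G/N$, so we may choose $H$ to contain $Q$; similarly there is a Sylow $p$-subgroup $S$ of $G$ containing $P$, giving $G = SH$ and $[S, H] \leq N$. The identity $N = [N, G]$ (since $\gamma_\infty(G)$ is fixed by $[-, G]$ for finite $G$), combined with the direct decomposition of $N$, yields $P = [P, G]$. By the non-generator property of $\Phi(P)$ it suffices to prove $P = [P, H]\Phi(P)$, so passing to $G/\Phi(P)$ we may assume that $P$ is elementary abelian.

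Next I would show that $[P, H]$ is normal in $G$. Since $P$ is now abelian, $[P, H]$ is automatically $P$-invariant and by construction $H$-invariant. For $S$-invariance, one expands
\[
[p, h]^{s} \;=\; [p^{s},\, h\cdot[h, s]],
\]
and writes $[h, s] = p'q' \in N = P \times Q$; the abelianness of $P$ together with $[P, Q] = 1$ then collapses the right-hand side to $[p^{s}, h] \in [P, H]$. Hence $[P,H]$ is $G = SH$-invariant.

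Finally, pass to $\bar G = G/[P, H]$. By construction $[\bar P, \bar H] = 1$, and $\bar G$ remains metanilpotent with $\bar N = \gamma_\infty(\bar G) = \bar P \times \bar Q$, where $\bar Q$ centralizes $\bar P$. Quotienting $P = [P, G]$ gives $\bar P = [\bar P, \bar G]$; since $\bar G = \bar S\bar H$ and $[\bar P, \bar H] = 1$, this subgroup is generated by $\bar H$-conjugates of $[\bar P, \bar S]$, and a computation parallel to the previous paragraph (again using $[\bar S, \bar H] \leq \bar N$, abelianness of $\bar P$, and $[\bar P, \bar Q] = 1$) shows that $\bar H$ centralizes $[\bar P, \bar S]$. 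Therefore $\bar P = [\bar P, \bar S]$. But $\bar P$ is a normal subgroup of the $p$-group $\bar S$, so iterating gives $\bar P = [\bar P, {}_{k}\bar S] \leq \gamma_{k+1}(\bar S)$ for every $k$, and nilpotency of $\bar S$ forces $\bar P = 1$, i.e.\ $P = [P, H]$. The main obstacle is the careful commutator bookkeeping in the last two steps, where the structural relations $[S, H] \leq N$, $[P, Q] = 1$, and the abelianness of $P$ must be combined precisely.
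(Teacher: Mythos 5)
Your proof is correct. Note first that the paper does not actually prove this lemma: it is quoted as well known, with a pointer to \cite[Lemma 2.4]{AST}, so there is no in-paper argument to compare against. Your self-contained proof is a valid version of the standard argument: reduce to $P$ elementary abelian via the Frattini quotient, check that $[P,H]$ is normal in $G=SH$ using $[S,H]\leq\gamma_\infty(G)$ and $[P,Q]=1$, and then in $G/[P,H]$ use $P=[P,G]=[P,S]\leq\gamma_{k}(S)$ for all $k$ to force $\bar P=1$. All the commutator manipulations check out. One small imprecision worth fixing: the containment $Q\leq H$ does not come from the normality of $HN/N$ in $G/N$; it is automatic because $Q$ is a normal $p'$-subgroup of $G$, hence $QH$ is a $p'$-subgroup containing the Hall $p'$-subgroup $H$, so $Q\leq H$. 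Likewise $P\leq S$ because $P$ is a normal $p$-subgroup. Neither point affects the validity of the proof.
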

We will now prove Theorem \ref{mainf} under the additional assumption that $G$ is metanilpotent.

\begin{lemma}\label{h=2} Let $G$ be a finite metanilpotent group in which for each $g\in G$ the Engel sink ${\E}(g)$ generates a cyclic subgroup. Then $\gamma_{\infty}(G)$ is cyclic.
\end{lemma}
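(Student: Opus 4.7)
The plan proceeds by three reductions followed by a two-dimensional linear algebra argument. First, since $G$ is metanilpotent, $\gamma_\infty(G)$ is nilpotent and equals the direct product of its Sylow $p$-subgroups $P_p$, each of which is characteristic in $\gamma_\infty(G)$ and hence normal in $G$. It therefore suffices to show each $P = P_p$ is cyclic. Let $H$ be a Hall $p'$-subgroup of $G$, which exists since $G$, being metanilpotent, is soluble; Lemma \ref{gama_inf} gives $P = [P, H]$.

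Next I would reduce to the case where $P$ is elementary abelian. The Frattini subgroup $\Phi(P)$ is characteristic in $P$, hence normal in $G$, and the hypothesis passes to the quotient $G/\Phi(P)$ because images of Engel sinks are Engel sinks and generate cyclic subgroups. Since the Sylow $p$-subgroup of $\gamma_\infty(G/\Phi(P))$ is $P/\Phi(P)$, cyclicity of $P/\Phi(P)$ implies cyclicity of $P$ by the Burnside basis theorem. Assume then that $P$ is elementary abelian, viewed as an $\mathbb{F}_p[H]$-module $V$ with $V = [V, H]$. For every $h \in H$ I would show $\dim_{\mathbb{F}_p}[V, h] \le 1$: by Lemma \ref{cc}(ii), $[[V, h], h] = [V, h]$; since $V$ is abelian, the map $v \mapsto [v, h]$ is the $\mathbb{F}_p$-linear map $h - 1$, whose restriction to $[V, h]$ is thus surjective, and so bijective by finiteness. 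Iterating, every element of $[V, h]$ arises as $[x,_{\, n} h]$ for some $x \in [V, h]$ and arbitrarily large $n$, hence $[V, h] \subseteq \E(h)$. By hypothesis $[V, h]$ is then a cyclic subgroup of the elementary abelian group $V$, forcing $\dim_{\mathbb{F}_p}[V, h] \le 1$.

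Finally, assume for contradiction that $\dim V \ge 2$. Coprime action combined with $V = [V, H]$ gives $V^H = 0$, so one can choose nontrivial $h_1 \in H$ and $h_2 \in H$ with $V^{h_1} \ne V^{h_2}$. Set $W = V^{h_1} \cap V^{h_2}$, of codimension $2$; the elements $h_1, h_2, h_1 h_2$ and $[h_1, h_2]$ fix $W$ pointwise and hence act on $\bar V = V/W$. In a basis $u_i \in V^{h_i}/W$ one writes $\bar h_1 = \left(\begin{smallmatrix}1 & a \\ 0 & \lambda_1\end{smallmatrix}\right)$ and $\bar h_2 = \left(\begin{smallmatrix}\lambda_2 & 0 \\ b & 1\end{smallmatrix}\right)$ with $\lambda_1, \lambda_2 \in \mathbb{F}_p^\times \setminus \{1\}$. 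The rank condition $\dim[\bar V, \bar h_1 \bar h_2] \le 1$ reduces to $\det(\bar h_1 \bar h_2 - I) = 0$, forcing $ab = (\lambda_1 - 1)(\lambda_2 - 1) \ne 0$, so $a, b$ are both nonzero and $\bar h_1, \bar h_2$ do not commute. A direct calculation then shows $[\bar h_1, \bar h_2]$ has determinant $1$ and trace $2$, hence is a nontrivial unipotent of order $p$. This contradicts $[h_1, h_2] \in H$ being a $p'$-element, so $\dim V \le 1$ and $P$ is cyclic. The main obstacle is this final step—extracting from a single rank-$1$ constraint on $\bar h_1 \bar h_2$ the fact that the commutator $[\bar h_1, \bar h_2]$ must be a nontrivial transvection.
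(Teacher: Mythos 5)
Your proof is correct, and while your opening reductions coincide with the paper's (Sylow decomposition of $\gamma_\infty(G)$, Lemma \ref{gama_inf}, passage to $P/\Phi(P)$, and the coprime-action observation that $[V,h]\subseteq\E(h)$, whence $[V,h]$ is cyclic of dimension at most $1$), your endgame is genuinely different. The paper keeps the reduction $G=PH$ with $H/C_H(P)$ nilpotent and argues group-theoretically: either $H$ contains a noncyclic abelian subgroup $A$, in which case two commuting elements $a_1,a_2$ are produced with $[P,a_1][P,a_2]\le[P,a_1a_2]$ noncyclic, or else every abelian subgroup of $H$ is cyclic, and the classification $H\cong Q\times C$ with $Q$ generalized quaternion (\cite[Theorem 4.10(ii)]{go}) supplies a unique involution that is forced to act trivially on $P$. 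You instead choose $h_1,h_2$ with distinct fixed hyperplanes, pass to the two-dimensional quotient $\bar V=V/W$, and extract the contradiction from a $2\times2$ computation: $\det(\bar h_1\bar h_2-I)=0$ forces $ab=(\lambda_1-1)(\lambda_2-1)\ne0$, and then $[\bar h_1,\bar h_2]$ has determinant $1$ and trace $2+ab\bigl(\lambda_1+\lambda_2-1+ab-\lambda_1\lambda_2\bigr)/(\lambda_1\lambda_2)=2$ after substituting the value of $ab$, hence is a nontrivial unipotent of order $p$ --- impossible for the image of the $p'$-element $[h_1,h_2]$. Your route never uses the nilpotency of the Hall $p'$-subgroup (only that it is a $p'$-group acting coprimely) and avoids the structure theorem, so it is more elementary and marginally more general; the paper's route is shorter modulo that classification and makes the quaternion obstruction explicit. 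Two small points worth spelling out in a final write-up: the existence of $h_1,h_2$ with $V^{h_1}\ne V^{h_2}$ uses that each $V^h$ has codimension at most $1$, so if all the fixed spaces coincided the common hyperplane would be nonzero (as $\dim V\ge2$) yet contained in $C_V(H)=0$; and $\lambda_i\ne1$ because otherwise $\bar h_i$ would be a nontrivial unipotent, again contradicting $p'$-order.
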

\begin{proof} Since $\gamma_{\infty}(G)$ is nilpotent, it is sufficient to show that each Sylow subgroup of $\gamma_{\infty}(G)$ is cyclic. Thus, let $P$ be a Sylow subgroup of $\gamma_{\infty}(G)$ for some prime $p$. In view of Lemma \ref{gama_inf} we have $P=[P,H]$, where $H$ is a Hall $p'$-subgroup of $G$. Without loss of generality we can assume that $G=PH$. Replacing if necessary $P$ by $P/\Phi(P)$ and $H$ by $H/C_H(P)$, we can assume that $P$ is an elementary abelian $p$-group (a vector space over the field with $p$ elements) on which the nilpotent group $H$ acts faithfully by linear transformations.

Taking into account that $H$ is nilpotent, we note that ${\E}(h)=[P,h]$ for each nontrivial $h\in H$. Therefore, if $H=\langle g\rangle$ is cyclic, then $P={\E}(g)$ is cyclic, too. Hence, we assume that $H$ is noncyclic.

Suppose first that $H$ contains a noncyclic abelian subgroup $A$.
 Choose a nontrivial element $a_1\in A$. The cyclic subgroup $[P,a_1]$ is $A$-invariant and, by Lemma \ref{cc}(iv), the quotient $A/C_A([P,a_1])$ is cyclic. In particular $C_A([P,a_1])\neq1$ so we choose a nontrivial element $a_2\in C_A([P,a_1])$. Since $a_2$ centralizes $[P,a_1]$, it follows that $[P,a_1][P,a_2]$ is not cyclic. Moreover, it is clear that $a_1$ centralizes $[P,a_2]$. Hence, $[P,a_1][P,a_2]\leq[P,a_1a_2]$. This shows that ${\E}(a_1a_2)$ is not cyclic, a contradiction. Therefore all abelian subgroups of $H$ are cyclic.

It follows (see for example \cite[Theorem 4.10(ii), p.\ 199]{go}) that $H$ is isomorphic to $Q\times C$, where $Q$ is a generalized quaternion group and $C$ is a cyclic group of odd order. Let $a_{0}$ be the unique involution of $H$. It is clear that $a_0$ is contained in all maximal cyclic subgroups of $H$. Thus we have $[P,h]=[P,a_0]$ for any $h\in H$ and so $[P,H]=[P,a_0]$. Note that $[P,a_0]$ is an $H$-invariant subgroup of order $p$. In view of Lemma \ref{cc}(iv), note that $H$ induces a cyclic group of automorphisms of $[P,a_0]$. We deduce that $a_0$ acts trivially on $[P,a_0]$ and hence on $P$. This is a final contradiction. It shows that $P$ is cyclic, as required.
\end{proof}

Recall that the Fitting height of a finite soluble group $G$ is the minimum number $h=h(G)$ such that $G$ possesses a normal series $1=G_0\leq G_1\leq\dots\leq G_h=G$ all of whose factors are nilpotent. We say that a system of subgroups $P_1,\dots,P_k$ of $G$ is a tower of height $k$ if \begin{itemize}
\item Each subgroup $P_i$ has prime-power order.
\item $P_j$ is normalized by $P_i$ whenever $1\leq i\leq j\leq k$.
\item $P_{i+1}=\gamma_\infty(P_{i+1}P_i)$ for each $i=1,2,\dots,k-1$.
\end{itemize}
Every finite soluble group of Fitting height $h$ possesses a tower of height $h$ (see for example \cite{turull}).

We are now ready to prove the theorem on finite groups.

\begin{proof}[Proof of Theorem \ref{mainf}]
Recall that $G$ is a finite group in which for each $g\in G$ the Engel sink ${\E}(g)$ generates a cyclic subgroup. We need to show that $\gamma_{\infty}(G)$ is cyclic. By Corollary \ref{112}, the group $G$ is soluble. Suppose that the theorem is false and let $G$ be a counter-example of minimal order. Lemma \ref{h=2} shows that $h(G)\geq3$.

Choose three subgroups $P_1,P_2,P_3$ which form a tower of height 3. Since $P_3=\gamma_\infty(P_3P_2)$, because of Lemma \ref{h=2} we conclude that $P_3$ is cyclic. By  Lemma \ref{cyc} the subgroup  $P_2P_1$ induces an abelian group of automorphisms of $P_3$. Since $P_2=\gamma_\infty(P_2P_1)$, we conclude that $P_2$ acts on $P_3$ trivially. In other words, $P_2$ centralizes $P_3$. In view of the equality $P_3=\gamma_\infty(P_3P_2)$ we have a contradiction. This completes the proof.
\end{proof}

\section{Proof of Theorem \ref{main}}
 Our purpose in this section is to prove Theorem \ref{main}. Given an element $g$ of a group $G$, for each $n\geq 1$, we will  denote by $E_n(g)$ the  subgroup  of $G$ generated by all commutators of the form $[x,_{\,n} g]$, with $x$ in $G$. 
  
The next two results, whose proofs can be found in \cite[Lemmas 2.1 and 2.2]{shu2} respectively, state general facts about nilpotent groups and Engel elements.
 
\begin{lemma}\label{ord1} Let $G=H\langle a\rangle$, where $H$ is a normal  nilpotent  subgroup of class $c$ and $a$ is an $n$-Engel element. Then $G$ is nilpotent with class at most $cn$.
\end{lemma}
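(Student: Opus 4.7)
The plan is to induct on the nilpotency class $c$ of $H$, exploiting that each term $\gamma_i(H)$ of the lower central series of $H$ is characteristic in $H$ and hence normal in $G$. The heuristic guiding the argument is that the $n$-Engel hypothesis on $a$ forces the automorphism induced by $a$ to satisfy $(a-1)^n = 0$ on each abelian layer of $H$, and this kills $n$ terms of the lower central series of $G$ per layer of $H$.

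For the base case $c=1$, so that $H$ is abelian, I would show by a secondary induction on $i$ that $\gamma_{i+1}(G) = [H, {}_i a]$ for all $i \geq 1$. Cyclicity of $G/H$ yields $\gamma_2(G) \leq H$, and expanding $[h_1 a^{k_1}, h_2 a^{k_2}]$ together with $[H,H]=1$ identifies $\gamma_2(G) = [H, a]$. For the inductive step, each $\gamma_k(G)$ with $k \geq 2$ lies in the abelian group $H$ and is therefore centralized by $H$, so the commutator identity $[n, h a^k] = [n, a^k]$ collapses $[\gamma_k(G), G]$ to $[\gamma_k(G), \langle a \rangle] = [\gamma_k(G), a]$ (the last equality by $\langle a\rangle$-invariance of $[\gamma_k(G), a]$, which follows from normality of $\gamma_k(G)$ in $G$). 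Writing $H$ additively with $a$ acting by conjugation, $[H, {}_i a]$ is the image of the endomorphism $(a-1)^i$; the $n$-Engel hypothesis says $(a-1)^n h = 0$ for every $h \in H$, hence $(a-1)^n = 0$ on $H$, so $\gamma_{n+1}(G) = [H, {}_n a] = 1$.

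For the inductive step on $c$, I apply the inductive hypothesis to $\bar G := G/\gamma_c(H) = (H/\gamma_c(H))\langle \bar a \rangle$, where $H/\gamma_c(H)$ has class at most $c-1$ and the image $\bar a$ remains $n$-Engel, to obtain $\gamma_{(c-1)n+1}(G) \leq \gamma_c(H)$. Since $\gamma_c(H) \leq Z(H)$ is abelian and normal in $G$, I set $N := \gamma_{(c-1)n+1}(G)$ and note $[N, H] = 1$. The ``$H$-collapse'' from the base case then gives $[N, G] = [N, a]$, and iterating it yields
\[
\gamma_{(c-1)n + 1 + i}(G) \;\leq\; [\gamma_c(H), {}_i a] \qquad \text{for all } i \geq 0.
\]
At $i = n$, the additive linear-algebra step applied now to the abelian group $\gamma_c(H)$ forces $[\gamma_c(H), {}_n a] = 1$, hence $\gamma_{cn+1}(G) = 1$, completing the induction.

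The step I expect to need the most care is the collapse $[N, G] = [N, a]$ under the hypotheses that $N$ is normal in $G$ and $[N, H] = 1$: the identity $[n, xy] = [n, y][n, x]^y$ first reduces $[N, G]$ to $[N, \langle a \rangle]$, and then the $\langle a \rangle$-invariance of $[N, a]$, inherited from normality of $N$ in $G$, upgrades $[N, \langle a \rangle]$ to $[N, a]$ itself. Once this reduction is secured, the entire proof is essentially the observation that an $n$-Engel action on an abelian group dies in $n$ applications of $a - 1$, applied layer by layer along the lower central series of $H$.
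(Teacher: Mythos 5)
Your proof is correct. The paper does not actually prove this lemma — it cites \cite[Lemma 2.1]{shu2} — and your argument (induction on the class of $H$, with the abelian base case in which $\gamma_{i+1}(G)=[H,{}_{i}\,a]$ and the $n$-Engel condition annihilates the $n$th power of $\alpha-1$ on each central layer) is essentially the standard proof given in that reference, so there is nothing to object to.
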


\begin{lemma}\label{ord2} For any positive integers $c,n$ there exists an integer $f = f(c,n)$ with the following property. Let $G=H\langle a\rangle$, where $H$ is a normal  nilpotent  subgroup of class $c$. Then $\gamma_f(G)\leq E_n(a)$.
\end{lemma}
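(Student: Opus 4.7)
The plan is to proceed by induction on $c$, the nilpotency class of $H$. Two preliminary observations will be used throughout: every $x\in G$ has the form $ha^k$ with $h\in H$, and $[ha^k,_n a]=[h^{a^k},_n a]$, which gives $E_n(a)=\langle[h,_n a]:h\in H\rangle\le H$; moreover $E_n(a)$ is $a$-invariant because $[h,_n a]^a=[h^a,_n a]$, so $[e,a]=e^{-1}e^a\in E_n(a)$ for every $e\in E_n(a)$.

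For the base case $c=1$ (abelian $H$), I would show $\gamma_{k+1}(G)=[H,_k a]$ by induction on $k$: for $k=1$, commutator expansion of $[h_1 a^{k_1},h_2 a^{k_2}]$, using $[H,H]=1$, leaves only terms of the form $[h,a]^{a^j}\in[H,a]$, so $[G,G]=[H,a]$; for the inductive step, $[[H,_{k-1}a],H]\subseteq[H,H]=1$ collapses $[\gamma_k(G),G]$ to $[[H,_{k-1}a],a]=[H,_k a]$. Since $H$ is abelian, $[h_1h_2,_n a]=[h_1,_n a][h_2,_n a]$, hence $E_n(a)=[H,_n a]=\gamma_{n+1}(G)$, so $f(1,n)=n+1$ works.

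For $c\ge 2$, let $Z=\gamma_c(H)\le Z(H)$; this is abelian and normal in $G$. The inductive hypothesis applied to $G/Z$ (whose $H$-part has class $c-1$) gives $\gamma_{f_1}(G)\le E_n(a)\cdot Z$ with $f_1=f(c-1,n)$, and the base case applied to $Z\langle a\rangle$ gives $[Z,_n a]\le E_n(a)$. A key identity, provable by induction on $k$, is $[ez,_k a]=[e,_k a]\cdot[z,_k a]$ whenever $e\in H$ and $z\in Z(H)$ (since $[z,_k a]\in Z$ commutes with $[e,_{k+1}a]\in H$), and this already yields $[\gamma_{f_1}(G),_n a]\le E_n(a)$. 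To upgrade from this to a bound on $\gamma_f(G)$ itself -- controlling commutators with arbitrary $G$-elements, not just with $a$ -- I would invoke Lemma~\ref{ord1}. Since $E_n(a)\le\gamma_{n+1}(G)$, we have $u^g=u[u,g]\in E_n(a)\gamma_{n+2}(G)$ for $u\in E_n(a)$ and $g\in G$, so $M:=E_n(a)\gamma_{n+2}(G)$ is a normal subgroup of $G$ containing $E_n(a)^G$; in $G/M$, $\bar a$ is $n$-Engel and $\bar H$ has class $\le c$, so Lemma~\ref{ord1} yields $\gamma_{cn+1}(G)\le E_n(a)\gamma_{n+2}(G)$.

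The principal obstacle is the final synthesis: combining $\gamma_{f_1}(G)\le E_n(a)Z$, $\gamma_{cn+1}(G)\le E_n(a)\gamma_{n+2}(G)$, and $[Z,_n a]\le E_n(a)$, and iterating until the deep-commutator tail is absorbed into $E_n(a)\cdot Z$, at which point the identity $[ez,_n a]=[e,_n a][z,_n a]$ forces the result into $E_n(a)$. The difficulty stems from the non-normality of $E_n(a)$ in $G$, which prevents a direct passage to quotients and requires carefully tracking how conjugation by $H$-elements perturbs elements of $E_n(a)$ modulo ever-deeper commutators; the bookkeeping of how many iterations are needed to guarantee absorption is the technically delicate part.
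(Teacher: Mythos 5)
The paper does not actually prove this lemma; it is quoted from \cite[Lemma 2.2]{shu2}, so there is no internal proof to compare against. Judged on its own terms, your proposal has a genuine gap, and you acknowledge as much: the ``final synthesis'' you defer is not bookkeeping but the entire content of the inductive step. Your base case is correct ($E_n(a)=[H,_{\,n}a]=\gamma_{n+1}(G)$ for abelian $H$), and the auxiliary facts you assemble are correct: the splitting identity $[ez,_{\,k}a]=[e,_{\,k}a][z,_{\,k}a]$ for $z\in Z(H)$, the containment $[\gamma_{f_1}(G),_{\,n}a]\leq E_n(a)$, and the normality of $M=E_n(a)\gamma_{n+2}(G)$. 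But the one global quantitative conclusion you extract from Lemma \ref{ord1}, namely $\gamma_{cn+1}(G)\leq E_n(a)\gamma_{n+2}(G)$, is vacuous in the inductive step: for $c\geq 2$ and $n\geq 1$ one has $cn+1\geq n+2$, so $\gamma_{cn+1}(G)\leq\gamma_{n+2}(G)$ holds trivially and the containment carries no information. The error term $\gamma_{n+2}(G)$ is not deeper than the conclusion, so no amount of iteration absorbs it; this particular route cannot close.

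The underlying difficulty you name -- that $E_n(a)$ need not be normal in $G$ -- is precisely the point the lemma exists to overcome, and your proposal contains no mechanism for it. The fact $[\gamma_{f_1}(G),_{\,n}a]\leq E_n(a)$ only controls commutation with $a$, whereas $\gamma_{f_1+k}(G)=[\gamma_{f_1}(G),_{\,k}G]$ requires controlling commutation with all of $H$, and the cross-terms $[E_n(a),H]$ are exactly what is left unbounded. To finish one must either show that a suitable deep term $\gamma_{f_0}(G)$ (which \emph{is} normal) is already generated by elements of the form $[x,_{\,n}a]$, or replace $E_n(a)$ by a normal subgroup squeezed between $\gamma_f(G)$ and $E_n(a)$ -- for instance by exploiting that $[Z,_{\,n}a]$ is normal in $G$ (since $Z\leq Z(H)$ is normal in $G$) and passing to $G/[Z,_{\,n}a]$, where $\bar{a}$ is $n$-Engel on $\bar{Z}$, before re-examining $\gamma_{f_0}(G)\leq E_n(a)Z$. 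As written, the argument stops exactly where the lemma's real work begins.
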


Here and throughout the article $\gamma_f(G)$ denotes the $f$th term of the lower central series of $G$.

The following  lemma  concerns profinite groups and Engel elements.
 
\begin{lemma}\label{1111} Let $G=M\langle a\rangle$ be a profinite group with an abelian normal subgroup $M$ and an Engel element $a$. Then $G$ is nilpotent.
\end{lemma}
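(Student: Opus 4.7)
The plan is to convert the Engel hypothesis on $a$ into a statement about a single continuous endomorphism of the abelian profinite group $M$. Define $\phi\colon M\to M$ by $\phi(m)=[m,a]$; since $M$ is normal and abelian, each iterate $\phi^n$ is a continuous endomorphism of $M$ and $\phi^n(m)=[m,_{\,n}a]$. The hypothesis that $a$ is Engel then implies, at least on $M$, that $M=\bigcup_{n\geq 1}\ker\phi^n$.

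The heart of the argument is a Baire category step. Each $\ker\phi^n$ is a closed subgroup of the compact Hausdorff space $M$, so some $\ker\phi^{n_0}$ must have non-empty interior; a subgroup with non-empty interior is open, so $\ker\phi^{n_0}$ has finite index in $M$ and $\phi^{n_0}(M)\cong M/\ker\phi^{n_0}$ is finite. On a finite group the pointwise nilpotent endomorphism $\phi$ is automatically uniformly nilpotent, so there exists $n_1$ with $\phi^{n_1}(\phi^{n_0}(M))=1$. Setting $n=n_0+n_1$ we obtain $\phi^n(M)=1$, i.e.\ $[m,_{\,n}a]=1$ for every $m\in M$.

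To upgrade this to a genuine Engel condition on all of $G$, I would write any $x\in G$ as $x=mc$ with $m\in M$ and $c\in\overline{\langle a\rangle}$; the identities $c^a=c$ and $[mc,a]=[m,a]^c$ then give by induction that $[mc,_{\,n}a]=[m,_{\,n}a]^c=1$. Thus $a$ is an $n$-Engel element of $G$, and Lemma \ref{ord1} applied with $H=M$ (nilpotent of class $1$) immediately yields that $G$ is nilpotent.

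The delicate point is the Baire step: one must verify that $\phi$ is continuous, so that each $\ker\phi^n$ is genuinely closed, and then invoke the fact that a profinite group, being compact Hausdorff, is a Baire space, so that the countable cover $M=\bigcup_n\ker\phi^n$ has at least one member with non-empty interior. Everything after that reduces to a routine commutator identity and a direct appeal to Lemma \ref{ord1}.
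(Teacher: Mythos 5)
Your proof is correct and follows essentially the same route as the paper: a Baire category argument applied to the closed sets $\{x\in M\mid [x,_{\,n}a]=1\}$ (your $\ker\phi^n$) to extract a uniform Engel length on an open subgroup of $M$, then finiteness of the quotient/image to make the bound uniform on all of $M$, and finally Lemma \ref{ord1}. The only cosmetic difference is that you exploit the fact that these sets are subgroups (hence open once they have interior), whereas the paper works with a coset $bN$ and a separate reduction modulo the open subgroup $N$; both amount to the same two-step Baire-plus-finiteness argument.
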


\begin{proof}
For any nonnegative integer $i$ set 
$$B_i=\{x\in M \mid [x,_{\,i} a]=1\}.$$
Each set $B_i$ is closed, and $\bigcup_{i\geq 0} B_i=M$. By Baire's Category Theorem \cite[p.\ 200]{kelly} at least one of these sets has non-empty interior. Therefore there exist an integer $n$, an element $b$ in $M$ and an open normal subgroup $N$ contained in $M$ such that $[y,_{\,n} a]=1$ for any $y\in bN$. From this we deduce that $[x,_{\,n} a]=1$ for any $x$ in $N$.  Since $N$ is open in $M$, there exists a positive integer $k$ such that $[z,_{\,k} a]\in N$ for any $z\in M$. Thus $[M, _{\,n+k} a]=1$ and the result follows. 
\end{proof}

Note that, for  an element $g$ of a group $G$, once a sink $\E(g)$ is chosen, the  subgroup $\langle \E(g)\rangle$ generated by $\E(g)$ is also a sink for $g$. In the remaining part of this article it will be convenient to use the term ``sink $\E(g)$ of $g$'' meaning
a subgroup containing all sufficiently long commutators $[x, _ig]$ with $x\in G$. 

\begin{lemma}\label{1112} Let $G$ be a metabelian profinite group and  let $a$ be an element of $G$. Then, for any choice of a sink  $\E(a)$,  there exists an integer $n$ such that $E_n(a)\leq \E(a)$. 
\end{lemma}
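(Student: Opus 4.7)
The plan is to reduce the statement to a question about the $\langle a\rangle$-module structure on $M := G'$ and then apply a Baire-category argument in the spirit of Lemma \ref{1111}, followed by a compactness step to make the bound uniform. Since $G$ is metabelian, $M$ is a closed abelian normal subgroup of $G$, and $[x,a] \in M$ for every $x \in G$. Writing $M$ additively and letting $t$ denote the action on $M$ of conjugation by $a$, one verifies by induction on $n$ that
\[
[x, {}_n a] = (t-1)^{n-1}[x,a] \qquad (n \geq 1,\ x \in G),
\]
and in particular $[m, {}_n a] = (t-1)^n m$ for every $m \in M$. It thus suffices to find a single integer $n$ such that $(t-1)^n m \in M_0 := \E(a) \cap M$ for all $m \in M$; taking $m = [x,a]$ will then yield $E_{n+1}(a) \leq \E(a)$.

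Next, I would repeat the Baire argument of Lemma \ref{1111} ``one level up''. The sink hypothesis applied to each $m \in M$ shows that $(t-1)^n m \in M_0$ for all sufficiently large $n$. Consequently the closed subsets
\[
B_i = \{\, m \in M : (t-1)^j m \in M_0 \text{ for all } j \geq i \,\}
\]
form an ascending chain whose union is $M$, so by Baire's theorem some $B_{n_0}$ has nonempty interior. Because $M$ is profinite, I can then locate an element $b$ and an open subgroup $N \leq M$ with $b + N \subseteq B_{n_0}$; using that $M_0$ is a subgroup, subtracting $(t-1)^j b \in M_0$ from $(t-1)^j(b+y) \in M_0$ yields $(t-1)^j y \in M_0$ for all $y \in N$ and $j \geq n_0$, so $N \subseteq B_{n_0}$.

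Finally, I would exploit the finiteness of $M/N$: choose coset representatives $m_1, \dots, m_k$ and, for each, an integer $n_i$ with $(t-1)^j m_i \in M_0$ for $j \geq n_i$, and set $n_* = \max(n_0, n_1, \dots, n_k)$. Every $m \in M$ decomposes as $m_i + y$ with $y \in N$, giving $(t-1)^j m \in M_0$ for all $j \geq n_*$; taking $n = n_* + 1$ completes the argument. The main delicate point, I expect, is the Baire step---promoting ``nonempty interior'' to ``contains an open subgroup''---which relies essentially on the subgroup structure of $M_0$ (closure under differences) and on the existence in the profinite group $M$ of an open-subgroup neighborhood basis at the origin.
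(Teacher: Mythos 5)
Your proof is correct and follows essentially the same route as the paper: reduce to the action of $a$ on the closed abelian normal subgroup $G'$ and run a Baire-category-plus-compactness argument there. The paper packages this step by forming the normal subgroup $E_1\leq \E(a)$ generated by the commutators $[x,{}_{\,i}a]$ with $x\in G'$ that already lie in the sink and then citing Lemma \ref{1111} for the quotient, whereas you carry out the same Baire argument directly relative to $\E(a)\cap G'$; the mathematical content is the same.
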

\begin{proof} If $\E(a)$ is finite, then $a$ is  Engel  in $G$. Set $K=G'\langle a\rangle$. By Lemma \ref{1111} the subgroup $K$ is nilpotent and $[G',_{\,n-1} a]=1$, for some integer $n$. Therefore $[G,_{\,n} a]=1$ and so $E_n(a)\leq \E(a)$.

Assume that $\E(a)$ is infinite. Let $E_1$ be the subgroup generated by all commutators $[x,a,\ldots,a]\in \E(a)$ such that $x\in G'$. Note that $E_1\leq\E(a)$ and $E_1$ is  a normal subgroup of $G$. Moreover $a$ is Engel in $G/E_1$. In view of  Lemma \ref{1111} the subgroup $G'\langle a\rangle$ is nilpotent modulo $E_1$ and the result follows.  
\end{proof}

\begin{lemma}\label{1113} Let $G$ be a metabelian profinite group and $a\in G$.  For each $n\geq1$ the subgroup $E_n(a)$ is normal in $G$.
\end{lemma}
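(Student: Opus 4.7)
The plan is to work inside the abelian normal subgroup $M=G'$ of $G$, on which $G$ acts by conjugation, with the action factoring through the abelian quotient $G/M$. Since $[x,_{\,n}a]\in M$ for every $n\geq 1$, the subgroup $E_n(a)$ already lies in $M$, and because $M$ is abelian it centralizes $E_n(a)$. Hence the whole task reduces to showing that $E_n(a)$ is invariant under conjugation by $G$, that is, under $G/M$.

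Define a map $\varphi\colon M\to M$ by $\varphi(y)=[y,a]$. Because $M$ is abelian, $\varphi$ is a group endomorphism of $M$ (this is the only place the metabelian hypothesis is really used). A straightforward induction then shows
\[
[x,_{\,n}a]=\varphi^{n-1}([x,a])\qquad (x\in G,\ n\geq 1),
\]
since $[x,a]\in M$ and successive commutators with $a$ simply iterate $\varphi$ on elements of $M$. Consequently
\[
E_n(a)=\varphi^{n-1}\bigl([G,a]\bigr),
\]
where $[G,a]=\langle[x,a]:x\in G\rangle$ is regarded as a subgroup of $M$.

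The second step is to verify that $[G,a]$ is already a normal subgroup of $G$, a fact valid in any group. The standard commutator identity $[xy,a]=[x,a]^y[y,a]$ rearranges to $[x,a]^y=[xy,a]\cdot[y,a]^{-1}$, and both factors on the right belong to $[G,a]$. Hence $[G,a]^y\subseteq[G,a]$ for every $y\in G$, so $[G,a]$ is normal in $G$.

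To finish, I would observe that $\varphi$ commutes with the action of $G$ on $M$. Indeed, since $G/M$ is abelian, $\varphi$ is just ``multiplication by $\bar a-1$'', where $\bar a$ is the image of $a$ in the commutative group $G/M$, and this operation commutes with the action of every $\bar g\in G/M$ on $M$. Thus $\varphi^{n-1}$ sends $G$-invariant subgroups of $M$ to $G$-invariant subgroups, so $E_n(a)=\varphi^{n-1}([G,a])$ is normal in $G$; in the profinite setting the conclusion passes to the closed subgroup generated because conjugation is continuous. I do not anticipate a serious obstacle: once the factorization $E_n(a)=\varphi^{n-1}([G,a])$ and the normality of $[G,a]$ are in hand, the conclusion is forced by the commutativity of $G/M$.
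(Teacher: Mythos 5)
Your proof is correct and rests on the same two facts as the paper's: that $y\mapsto[y,a]$ is a $G$-equivariant endomorphism of the abelian group $G'$ (the paper's identity $[g,_{\,i}\,a]^y=[g^y,_{\,i}\,a]$ for $g\in G'$) and that $[G,a]$ is normal via $[x,a]^y=[xy,a][y,a]^{-1}$. You package this as $E_n(a)=\varphi^{n-1}([G,a])$ with $\varphi$ equivariant under the $G/G'$-action, whereas the paper combines the same identities into the single generator computation $[x,_{\,n}\,a]^y=[xy,_{\,n}\,a][y,_{\,n}\,a]^{-1}$; the mathematical content is the same.
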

\begin{proof} For any $i\geq 1$, any  $g\in G'$ and $y\in G$ we have $$[g,_{\,i} a]^y=[g^y,_{\,i} a]\,\, \text{and}\,\, [g^{-1},_{\,i} a]=[g,_{\,i} a]^{-1}.$$  Moreover, for any $x,y\in G$, the  equality $[x,a]^y=[xy,a][y,a]^{-1}$ holds.  

We only need to prove the lemma with $n\geq2$ since for $n=1$ the result is well known even without the assumption that $G$ is metabelian. For arbitrary elements $x,y\in G$, by using the standard commutator laws, write $$[x,_{\,n} a]^y=[[x,a]^y,_{\,n-1} a]=[[xy,a][y,a]^{-1},_{\,n-1} a]=[xy,_{\,n} a][y,_{\,n} a]^{-1}.$$  The formula above shows that $[x,_{\,n} a]^y\in E_n(a)$ and the lemma follows.
\end{proof}

We write $C_n$ to denote the cyclic group of order $n$ and $\mathbb{Z}_p$ the additive group of $p$-adic integers. Recall that the group of automorphisms of $\mathbb{Z}_p$ is isomorphic to $\mathbb{Z}_p\oplus C_{p-1}$ if $p\geq3$ and $\mathbb{Z}_2\oplus C_2$ if $p=2$ (see for example \cite[Theorem 4.4.7]{riza}). Note that all nontrivial subgroups of $\mathbb{Z}_p$ have finite index in $\mathbb{Z}_p$ (see for example \cite[Proposition and Corollary 1 at p.\ 23]{robert}).

\begin{lemma}\label{nnilpact}
Let $G$ be a pro-$p$ group and $K$ a normal infinite procyclic subgroup of $G$. If $a\notin C_G(K)$, then  for any $i\geq1$, the subgroup $[K,_{\,i} a]$ has finite index in $K$. In particular, if $G$ is locally nilpotent, then $K$ is central in $G$.
\end{lemma}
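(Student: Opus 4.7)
The plan is to identify $K$ with the ring $\mathbb{Z}_p$ and reduce the statement to a simple computation with the multiplication action of a unit. Since $K$ is an infinite procyclic pro-$p$ group, I fix an isomorphism of topological groups $K\cong\mathbb{Z}_p$ and a topological generator $k_0$ of $K$. Conjugation by $a$ is a continuous automorphism of $K$, and because every automorphism of $\mathbb{Z}_p$ is multiplication by a unit (as recalled just before the lemma), $a$ acts as $k\mapsto\lambda k$ for a unique unit $\lambda$ of $\mathbb{Z}_p$. In additive notation this gives $[k,a]=(\lambda-1)k$, and by induction $[k,_{\,i}a]=(\lambda-1)^{i}k$ for every $k\in K$.

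For the first conclusion, since $K$ is abelian the subgroup $[K,_{\,i}a]$ is generated by the elements $[k,_{\,i}a]=(\lambda-1)^{i}k$ with $k\in K$, so it coincides with $(\lambda-1)^{i}K$. The hypothesis $a\notin C_G(K)$ translates to $\lambda\neq 1$, hence $\lambda-1$ is a nonzero element of the integral domain $\mathbb{Z}_p$ and $(\lambda-1)^{i}\neq 0$ for every $i\geq 1$. Thus $[K,_{\,i}a]$ is a nontrivial closed subgroup of $K\cong\mathbb{Z}_p$, and by the fact stated just before the lemma it has finite index in $K$.

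For the ``in particular'' clause, I argue by contradiction: assume $G$ is locally nilpotent but some $a\in G$ does not centralize $K$. The subgroup $\langle k_0,a\rangle$ is finitely generated, hence nilpotent by assumption, so $[k_0,_{\,n}a]=1$ for some $n$. This reads $(\lambda-1)^{n}k_0=0$; since $k_0$ corresponds to a unit of $\mathbb{Z}_p$, we conclude $(\lambda-1)^{n}=0$ and therefore $\lambda=1$, contradicting $a\notin C_G(K)$. I do not foresee a real obstacle: the only point that merits explicit verification is the identification $[K,_{\,i}a]=(\lambda-1)^{i}K$, which follows from $K$ being abelian and normal in $K\langle a\rangle$, so that $k\mapsto[k,a]$ is an additive endomorphism of $K$ that iterates to $(\lambda-1)^{i}$.
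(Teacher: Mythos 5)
Your proof is correct and follows essentially the same route as the paper: identify $K$ with the additive group of $\mathbb{Z}_p$, observe that conjugation by $a$ is multiplication by a unit $\lambda$, so that $[K,_{\,i}a]=(\lambda-1)^iK$ is nontrivial (hence of finite index) because $\mathbb{Z}_p$ has no zero divisors. Your explicit treatment of the ``in particular'' clause is a slight elaboration of what the paper leaves implicit, but the argument is the same.
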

\begin{proof} Let $\alpha$ be the automorphism of $K$ induced by the conjugation by the element $a$. Write $R$ for the ring of the $p$-adic integers and regard $K$ as the additive group of $R$.  There exists $b\in R$ such that $x^a=x\cdot b$, for each $x\in K$. Note that the subgroup $[K,_{\,n} a]$ consists  of elements of the form $x\cdot(b-1)^n$, where $x$ ranges over $K$. Moreover the set $\{x\cdot(b-1)^n \mid x\in K\}$ is infinite for each $n\geq 1$, since $R$ has no zero divisors. The lemma follows.
\end{proof}

We now can prove Theorem \ref{main} in the particular case where $G$ is metabelian. The general case will require considerably more efforts.
\begin{proposition}\label{1114} Let $G$ be a metabelian pro-$p$ group such that $\E(g)$ can be chosen procyclic for each $g$ in $G$. Then $G$ has a normal procyclic subgroup $K$ such that $G/K$ is locally nilpotent.
\end{proposition}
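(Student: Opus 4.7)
The plan is to construct a normal procyclic subgroup $K\leq M:=G'$ such that $E_n(a)\leq K$ for every $a\in G$ and some $n=n(a)$. Once this is done, every element $aK\in G/K$ is $n(a)$-Engel, so $G/K$ is an Engel profinite group, hence locally nilpotent by the Wilson--Zelmanov theorem \cite{wi-ze}.

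First, I would assemble what is already available. By Lemma~\ref{1112} and Lemma~\ref{1113}, for each $a\in G$ there is an integer $n_a$ such that $E_{n_a}(a)$ is normal in $G$ and contained in the procyclic sink $\E(a)$; hence $E_{n_a}(a)$ is itself a procyclic, $G$-invariant subgroup of $M$. Since $G$ is metabelian and $Q:=G/M$ is abelian, the commutator identities used in the proof of Lemma~\ref{1113} yield $E_n(a)=(\bar a-1)^n M$ as a closed subgroup of $M$, where $\bar a$ denotes the image of $a$ in $Q$; in particular $E_n(a)$ depends only on the coset $\bar a$. The natural candidate is then $K:=\overline{\langle E_{n_a}(a):a\in G\rangle}$, a closed normal subgroup of $G$ contained in $M$.

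The crux of the proof is to show that $K$ is procyclic. In the special case where $Q$ is itself procyclic, say topologically generated by $\bar t$, every $\bar g\in Q$ equals $\bar t^k$ for some $k\in\mathbb{Z}_p$, and in the Iwasawa algebra $\mathbb{Z}_p[[Q]]$ one has the identity
\[
\bar t^k-1=(\bar t-1)\sum_{n\geq1}\binom{k}{n}(\bar t-1)^{n-1},
\]
which exhibits $\bar t^k-1$ as a multiple of $\bar t-1$. Consequently $(\bar t^k-1)^m M\leq(\bar t-1)^m M$ for every $m\geq1$, so $E_m(g)\leq E_m(t)$ for every $g\in G$, and simply taking $K=E_{n_t}(t)$ (procyclic by hypothesis) already does the job.

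The main obstacle is the case of a non-procyclic abelian $Q$, where one must show that the closed normal subgroup generated by the a~priori many $E_{n_a}(a)$'s still sits inside a common procyclic subgroup of $M$. My approach is to examine the image of $K$ in $M/pM$: each nontrivial $E_{n_a}(a)+pM$ is a $1$-dimensional $G$-invariant subspace, and since $G$ is pro-$p$ while $\mathrm{GL}_1(\mathbb{F}_p)\cong\mathbb{F}_p^\times$ has order $p-1$, the induced action of $G$ on such a line is trivial, forcing the image to lie inside the fixed subspace $(M/pM)^G$. The plan is then to pin all these lines down to a single one by applying the hypothesis at products $ab$---the expansion $(\bar a\bar b-1)^n=((\bar a-1)+(\bar b-1)+(\bar a-1)(\bar b-1))^n$ controls how the two directions combine---and to iterate through the descending $p$-adic filtration $M\supseteq pM\supseteq p^2M\supseteq\cdots$ to upgrade $1$-dimensional control in each successive layer to procyclicity of $K$ itself. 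Making this iteration rigorous, especially when the $G$-action on $M/pM$ is already trivial so that the first layer is uninformative, is the delicate point.
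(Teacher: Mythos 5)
Your reduction to ``$G/K$ is Engel, hence locally nilpotent by Wilson--Zelmanov'' is the right framework, and the procyclic-$Q$ case via the Iwasawa algebra works (up to an index shift: $E_m(g)$ is generated by $[x,_{\,m}g]$ for $x\in G$, so $E_m(g)=[G,g](\bar g-1)^{m-1}$ rather than $(\bar g-1)^mM$, whence what you actually get is $E_m(g)\leq E_{m-1}(t)$; note also that $E_n(a)$ does not depend only on the coset $\bar a$, since $[x,am]\neq[x,a]$ for $x\notin M$). But the heart of the proposition is exactly the case you leave open, and the plan for it has two genuine problems. First, the target is too strong: you ask that \emph{all} of the subgroups $E_{n_a}(a)$ lie in one procyclic normal subgroup $K$, i.e.\ that their closed join be procyclic. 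This can fail even for two procyclic subgroups of an abelian pro-$p$ group meeting in finite index (inside $\mathbb{Z}_p\oplus C_p$ the subgroups $\mathbb{Z}_p\oplus 0$ and $\overline{\langle(1,\bar 1)\rangle}$ meet in index $p$ yet generate the whole, non-procyclic, group), and the mod-$p$ filtration cannot detect procyclicity: a closed subgroup of $M$ with one-dimensional image in $M/pM$ need not be procyclic (take $\mathbb{Z}_p\oplus p\mathbb{Z}_p$ inside $\mathbb{Z}_p\oplus\mathbb{Z}_p$). Second, the step that would ``pin all the lines down to a single one'' is precisely the missing idea, and you concede you do not have it.

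The paper proves something weaker and cleaner. Having normalized via Lemmas \ref{1112} and \ref{1113} so that each sink $\E(g)=E_{n_g}(g)$ is procyclic \emph{and normal} in $G$, it shows that the sinks of any two non-Engel elements $a,b$ intersect nontrivially: if $\E(a)\cap\E(b)=1$, then since $\E(b)$ is normal the commutators $[\E(b),_{\,i}a]$ eventually lie in $\E(a)\cap\E(b)=1$, so by Lemma \ref{nnilpact} $a$ centralizes $\E(b)$ (and symmetrically $b$ centralizes $\E(a)$); hence $ab$ acts on $\E(a)\oplus\E(b)$ as $a$ on the first summand and as $b$ on the second, and every $E_n(ab)$ contains a copy of $\mathbb{Z}_p\oplus\mathbb{Z}_p$, contradicting procyclicity of $\E(ab)$. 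One then takes $K=\E(a)$ for a \emph{single} non-Engel element $a$: every other sink meets $K$ in a subgroup of finite index, hence has finite image in $G/K$, which forces $G/K$ to be Engel and therefore locally nilpotent. The two ingredients you are missing are this centralization argument for trivially intersecting normal sinks (your expansion of $(\bar a\bar b-1)^n$ is in the right spirit, but without the centralization you cannot control the cross terms), and the observation that the sinks only need to become \emph{finite} modulo $K$, not trivial.
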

\begin{proof} If $G$ is Engel, then it is locally nilpotent and there is nothing to prove. Assume that $G$ is not Engel and let $X$ be the set of all non-Engel elements in $G$. In view of  Lemmas \ref{1112} and \ref{1113} we can assume that all $\E(g)$ are chosen procyclic and normal in $G$. Indeed, by Lemmas \ref{1112} and \ref{1113}, for each $g$ in $G$ there exists an  integer $n$ such that $E_n(g)$ is a normal subgroup  in $\E(g)$, so we can take such $E_n(g)$ as the sink $\E(g)$ of $g$. In view of Lemma \ref{nnilpact} each subgroup $[\E(x),_{\,i}x]$ has finite index in $\E(x)$ whenever $x\in X$.

Given $a,b\in X$, suppose  first that $\E(a)\cap \E(b)=1$. On the one hand, $a$ acts on $\E(a)$ in such a way that, for any $i\geq 1$, the subgroup $[\E(a),_{\,i} a]$ has finite index in $\E(a)$. On the other hand, $a$ centralizes $\E(b)$, since the intersection of the two sinks is trivial. A similar remarks applies to $b$.  Note that   $ab$  acts on $\E(a)\oplus\E(b)$ in the following way:  it acts as the element $a$ on $\E(a)$ and as $b$ on $\E(b)$.  This implies that, for any $n$, the subgroup $E_n(ab)$ contains a subgroup, which is the direct sum of a finite index subgroup in $\E(a)$ and a finite index subgroup in $\E(b)$, isomorphic to $\Bbb Z_p\oplus\Bbb Z_p$. Thus $\E(ab)$ is not procyclic, a contradiction. 

Hence, $\E(a)\cap \E(b)\not=1$, for any $a,b\in X$. Let $K=\E(a)$ for some $a\in X$. We see that, for any $g\in G$, the image in $G/K$  of the  sink $\E(g)$ is finite. Indeed, if $g$ is Engel in $G$, then  the claim is obvious. Otherwise $g\in X$  and the image of $\E(g)$ in $G/K$ is isomorphic to $\E(g)/(\E(a)\cap\E(g))$ which is finite.   It follows that $G/K$ is Engel, hence locally nilpotent by the Wilson-Zelmanov theorem. The proof is complete.
\end{proof}

Next, we consider another particular case of  Theorem \ref{main}. 
\begin{lemma}\label{3333} Let $G$ be  a soluble pro-$p$ group such that $\E(g)$ can be chosen procyclic for each $g$ in $G$. Assume that $G$  has a normal nilpotent subgroup $M$ and $a\in G$ such that $G=M\langle a\rangle$. Then there exists $n$ such that $E_n(a)$ is procyclic and normal in $G$. Moreover there exists $i$ such that $\gamma_i(G)$ has finite index in $E_n(a)$.
\end{lemma}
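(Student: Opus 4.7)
The proof I propose proceeds by induction on the nilpotency class $c$ of $M$. For the base case $c = 1$, the group $G = M\langle a\rangle$ is metabelian. Lemma \ref{1112} applied to the procyclic sink $\E(a)$ yields an integer $n$ with $E_n(a) \leq \E(a)$, so $E_n(a)$ is procyclic, and Lemma \ref{1113} gives its normality in $G$. Lemma \ref{ord2} with $c = 1$ produces $\gamma_{f(1,n)}(G) \leq E_n(a)$. For the finite-index assertion I would argue by dichotomy: if $a$ is Engel in $G$, then by Lemma \ref{1111} the group $G$ is nilpotent and $E_n(a) = \gamma_{f(1,n)}(G) = 1$ for large enough $n$; otherwise $E_n(a)$ is nontrivial (else $a$ is $n$-Engel) and so is $\gamma_{f(1,n)}(G)$ (else $G$ is nilpotent and $a$ is Engel), so both are nontrivial closed subgroups of $\E(a) \cong \Bbb Z_p$, and any such subgroup has finite index.

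For the inductive step ($c \geq 2$), set $Z = Z(M)$ and $\bar G = G/Z$. Then $\bar M = M/Z$ is nilpotent of class $c - 1$ and the procyclic-sink hypothesis passes to $\bar G$. By induction there is $n_1$ with $E_{n_1}(\bar a)$ procyclic and normal in $\bar G$. Let $L \leq M$ be its preimage; then $L$ is normal in $G$ with $L/Z$ procyclic. Since $Z \leq Z(M) \leq Z(L)$ and $L/Z$ is procyclic, $L$ is abelian. Applying the base case to the metabelian subgroup $G_0 = L\langle a\rangle$, with the inherited procyclic sink, yields $n_2$ such that $E_{n_2}^{G_0}(a)$ is procyclic and normal in $G_0$. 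For $n \geq n_1 + n_2$ and any $x \in G$, setting $y = [x,_{n_1} a] \in E_{n_1}(a) \leq L$ gives $[x,_n a] = [y,_{n - n_1} a] \in E_{n - n_1}^{G_0}(a) \leq E_{n_2}^{G_0}(a)$, so $E_n(a) \leq E_{n_2}^{G_0}(a)$ and is therefore procyclic.

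The main obstacle is transferring normality from $G_0$ to $G$. The tool I would use is Lemma \ref{cyc}: since $L$ is procyclic and $G$-invariant, $\mathrm{Aut}(L)$ is abelian, so $G' \leq C_G(L)$. For $g \in G$ we can write $a^g = a\, m_g$ with $m_g = [a,g^{-1}] \in G' \leq C_G(L)$, hence $a$ and $a^g$ induce the same automorphism of $L$. Using the identity $E_n(a)^g = E_n(a^g)$ together with the fact that for $n \geq n_1$ all the commutators entering the definition of $E_n(a^g)$ eventually land in $L$ (via $E_{n_1}(a^g) Z = E_{n_1}(a) Z = L$), one rewrites the generators of $E_n(a^g)$ as commutators with $a$ rather than $a^g$, yielding $E_n(a)^g = E_n(a)$ for $n$ large enough. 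Finally, the finite-index assertion follows from Lemma \ref{ord2}, which gives $\gamma_{f(c,n)}(G) \leq E_n(a)$, combined with the same Engel vs.\ non-Engel dichotomy as in the base case; here the implication ``$a$ Engel in $G$ implies $G$ nilpotent'' is extended to nilpotent $M$ by a short induction on $c$ reducing to the metabelian case of Lemma \ref{1111}.
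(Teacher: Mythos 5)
Your base case and the procyclicity part of the inductive step are sound and essentially follow the paper's route (induction on the class of $M$, passing to $G/Z(M)$, observing that the preimage $L$ of the procyclic subgroup is abelian, and feeding $L\langle a\rangle$ back into the metabelian case). The finite-index argument via the Engel/non-Engel dichotomy is also fine, since a nontrivial closed subgroup of a procyclic pro-$p$ group has finite index.

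The normality step, however, contains a genuine error. You assert that ``$L$ is procyclic'' in order to invoke Lemma \ref{cyc} and conclude $G'\leq C_G(L)$. But by your own construction $L$ is the full preimage of $E_{n_1}(\bar a)$ in $G$, so $Z(M)\leq L$; it is abelian (as you correctly note two sentences earlier) but in general far from procyclic, since $Z(M)$ can be any abelian pro-$p$ group once $M$ has class at least $2$. Without $G'\leq C_G(L)$ there is no reason for $a$ and $a^g$ to induce the same automorphism of $L$, and the rewriting of the generators of $E_n(a^g)$ as commutators with $a$ collapses; note also that even granting that rewriting, it would only yield $E_n(a)^g\leq E_{n_2}^{G_0}(a)$, not $E_n(a)^g=E_n(a)$. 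The paper sidesteps the whole issue: having shown $E_{n+j}(a)$ procyclic, it applies Lemma \ref{ord2} to get $\gamma_f(G)\leq E_{n+j}(a)$ for some $f$, so that $\gamma_f(G)$ is a \emph{normal} procyclic subgroup of $G$; every closed subgroup of a procyclic pro-$p$ group is characteristic in it, hence normal in $G$, and in particular $E_f(a)\leq\gamma_{f+1}(G)\leq\gamma_f(G)$ is normal and procyclic. Replacing your conjugation computation by this observation repairs the proof.
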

\begin{proof} We argue by induction on the nilpotency class of $M$. If $M$ is abelian, then   in view of Lemma \ref{1113} there exists $n$ such that $E_n(a)$ is procyclic and normal in $G$. Since $G/E_n(a)$ is nilpotent, the result holds.  Suppose that $M$ is nonabelian and set $Z=Z(M)$. By induction assume that there is $n$ such that $L=ZE_n(a)$ is normal in $G$ and $L/Z$ is procyclic. Since $L/Z$ is procyclic, the subgroup $L$ is abelian. Now looking at the action of $\langle a\rangle$ on $L$ and using the fact that $L$ is abelian,  Lemma \ref{1112} shows that if $j$ is big enough, then the subgroup $E_{n+j}(a)$ is procyclic. By  Lemma \ref{ord2} there exists $f$ such that $\gamma_f(G)\leq E_{n+j}(a)$. Thus, $\gamma_f(G)$ is a normal procyclic subgroup and so all subgroups of $\gamma_f(G)$ are normal in $G$. In particular, $E_f(a)$ is normal and procyclic. Since by Lemma \ref{ord1} the factor-group $G/E_f(a)$ is nilpotent, there exists $i$ such that $\gamma_i(G)\leq E_f(a)$. This completes the proof.
\end{proof}

In the sequel we will use, without mentioning explicitly, the following fact: let $H$ be a subgroup of a profinite group $G$ and  let $x$ be an element of $G$ such that $H^x\leq H$. Then $H^x=H$. This is   because  if $H^x<H$, then the inequality would also hold  in some finite image of $G$, which yields a contradiction.

The next result is a key observation that will be applied many times throughout the proof of the main result.
\begin{lemma}\label{lemma_subgr} Let $G$ be a profinite group and $K$ a procyclic pro-$p$ subgroup of $G$ such that $K\cap K^x\neq1$ for each $x\in G$. Then $K$ contains a nontrivial subgroup $L$ (of finite index) which is normal in $G$.
\end{lemma}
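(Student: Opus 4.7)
The plan is to combine the Baire Category Theorem with a finite-orbit intersection. Since $K$ is procyclic pro-$p$ (so isomorphic to $\mathbb{Z}_p$ or to a finite cyclic $p$-group), its nontrivial closed subgroups form a totally ordered descending chain $K=L_0>L_1>L_2>\cdots$, every such subgroup has finite index in $K$, and the intersection of any two nontrivial closed subgroups of $K$ is again nontrivial. The hypothesis $K\cap K^g\neq 1$ therefore forces $K\cap K^g=L_{m(g)}$ for some $m(g)$. For each $n\geq 0$ the set
\[
A_n=\{g\in G:L_n\leq K^g\}=\bigcap_{l\in L_n}\{g\in G:glg^{-1}\in K\}
\]
is closed, and $g\in A_{m(g)}$ gives $G=\bigcup_{n\geq 0}A_n$. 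Since $G$ is compact Hausdorff (hence Baire), some $A_n$ has nonempty interior, so I may find $x_0\in G$ and an open normal subgroup $N\trianglelefteq G$ (take the normal core of a sufficiently small open subgroup) with $x_0N\subseteq A_n$.

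Setting $L:=x_0L_nx_0^{-1}$, the inclusion $x_0N\subseteq A_n$ yields $(x_0v)L_n(x_0v)^{-1}\leq K$ for all $v\in N$, which rewrites as $wLw^{-1}\leq K$ with $w=x_0vx_0^{-1}$; by normality of $N$ this holds for all $w\in N$. Thus $L\leq K$ is nontrivial and $wLw^{-1}\leq K$ for every $w\in N$, so $L^*:=\langle L^N\rangle$ is an $N$-invariant nontrivial subgroup of $K$, hence of finite index in $K$. Because $N\leq N_G(L^*)$ and $[G:N]<\infty$, the $G$-orbit of $L^*$ is a finite set $\{(L^*)^{g_1},\ldots,(L^*)^{g_s}\}$ (with $g_1=1$), and the intersection $\tilde L:=\bigcap_{i=1}^{s}(L^*)^{g_i}\leq L^*\leq K$ is automatically normal in $G$.

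The main obstacle is verifying $\tilde L\neq 1$. I plan to induct on $j$ to show that the partial intersection $L^{(j)}:=\bigcap_{i=1}^{j}(L^*)^{g_i}$ is a nontrivial subgroup of $K$. In the inductive step, using $L^{(j)}\leq K$ and $(L^*)^{g_{j+1}}\leq K^{g_{j+1}}$, one decomposes
\[
L^{(j+1)}=\bigl(L^{(j)}\cap K^{g_{j+1}}\bigr)\cap\bigl((L^*)^{g_{j+1}}\cap K\bigr),
\]
with both factors lying inside the procyclic pro-$p$ group $K\cap K^{g_{j+1}}$, which is nontrivial by hypothesis. The first factor is the intersection of the two nontrivial closed subgroups $L^{(j)}$ and $K\cap K^{g_{j+1}}$ of the procyclic $K$, hence nontrivial by the total ordering of its closed subgroups; the second factor is nontrivial by the analogous argument inside $K^{g_{j+1}}$; and a further application of total ordering inside $K\cap K^{g_{j+1}}$ then shows that $L^{(j+1)}$ is nontrivial. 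After $s$ steps, $\tilde L=L^{(s)}$ is a nontrivial closed subgroup of the procyclic pro-$p$ group $K$ and therefore has finite index in $K$, completing the proof.
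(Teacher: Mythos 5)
Your proof is correct and follows essentially the same route as the paper: your sets $A_n$ coincide with the paper's $S_i=\{x\in G\mid [K:K\cap K^x]\le p^i\}$, and both arguments apply Baire's Category Theorem to produce an open normal subgroup normalizing a finite-index subgroup of $K$, then intersect its finitely many $G$-conjugates and check nontriviality via the total ordering of closed subgroups of a procyclic pro-$p$ group. Your construction $L^{*}=\langle L^{N}\rangle$ and the explicit induction for $\tilde L\neq 1$ just spell out steps the paper states more tersely.
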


\begin{proof} If $K$ is finite, the result is obvious, so we assume that $K$ is infinite. Recall  that $K\cap K^x$ has finite index in $K$ for each $x\in G$. For each $i$ set
$$S_i=\{x\in G\mid  K\cap K^x \text{ has index at most } p^i \text{ in } K\}.$$ The sets $S_i$ are closed. By Baire Category Theorem at least one of these sets has non-empty interior. Therefore there is an open normal subgroup $N$, an element $d\in G$, and a fixed $p$-power $p^i$ such that $K\cap K^x$ has index at most $p^i$ in $K$ for every $x\in dN$. Let $K_0=K^{p^i}$ be the subgroup of index $p^i$ in $K$. We see that $N$ normalizes $K_0$. Since $N$ is open, it follows that $K_0$ has only finitely many conjugates in $G$. Let $L$ be their intersection. Obviously, $L$ is normal in $G$. Since $K_0\cap K_0^x$ has finite index in $K_0$ for each $x\in G$, the subgroup $L$ is nontrivial.
\end{proof}

Now we are ready to deal with  the proof of Theorem \ref{main}. We want to establish that if $G$ is a soluble pro-$p$ group such that $\E(g)$ can be chosen procyclic for each $g$ in $G$, then $G $ has a normal procyclic subgroup $K$ such that $G/K$ is locally nilpotent.

\begin{proof}[Proof of Theorem \ref{main}]
The argument  will be by induction on the derived length  of $G$.  Set $H=G'$. By induction,
$H$ has a normal procyclic subgroup $K$ such that $H/K$ is locally nilpotent. 

\begin{claim}\label{1115} $H$ is locally nilpotent.
\end{claim}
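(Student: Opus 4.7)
The plan is to prove $H$ is Engel and hence locally nilpotent, via the Wilson--Zelmanov theorem. I reduce this to showing $K\le Z(H)$. Indeed, given $h\in H$ and $x\in H$, local nilpotence of $H/K$ gives some $n_0$ with $[x,_n h]\in K$ for $n\ge n_0$; once inside the abelian procyclic subgroup $K\cong\mathbb{Z}_p$ (I focus on the infinite case --- the finite case is simpler, since pro-$p$ elements act on finite cyclic $p$-groups as principal units, which iterate to the identity after boundedly many steps), the chain evolves as iterated multiplication by $b-1$, where $h$ acts on $K$ as multiplication by $b\in\mathbb{Z}_p^*$. The chain reaches $1$ iff $b=1$, so $h$ is Engel on $H$ iff $h$ centralizes $K$, and $H$ is Engel iff $K\le Z(H)$.

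Since $\mathrm{Aut}(K)\cong\mathbb{Z}_p\oplus C_{p-1}$ (or $\mathbb{Z}_2\oplus C_2$ for $p=2$) has abelian pro-$p$ part, if $K$ is $G$-invariant then $G/C_G(K)$, being a pro-$p$ subgroup of the abelian group $\mathrm{Aut}(K)$, is abelian; hence $H=G'\le C_G(K)$, giving $K\le Z(H)$. The same conclusion applies as soon as any $G$-invariant procyclic subgroup $L$ of finite index in $K$ is available, because an automorphism of $K\cong\mathbb{Z}_p$ trivial on a finite-index subgroup is trivial on all of $K$ (as $\mathbb{Z}_p$ is torsion-free).

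I split into two cases. If $K\cap K^g\neq 1$ for every $g\in G$, Lemma~\ref{lemma_subgr} directly produces such a $G$-invariant $L$, finishing the argument. Otherwise there is $a\in G$ with $K\cap K^a=1$. Then $K$ and $K^a$ are commuting normal procyclic subgroups of $H$, and $KK^a\cong\mathbb{Z}_p\oplus\mathbb{Z}_p$. For any $h\in H$ not centralizing $K$, taking a topological generator $k_0$ of $K$, the tail $\{[k_0,_n h]=(b-1)^n k_0:n\ge n_0\}$ lies in $\E(h)$, and since $\E(h)$ is closed it contains the subgroup topologically generated by this tail, namely $(b-1)^{n_0}K$, which has finite index in $K$ by Lemma~\ref{nnilpact}. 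An identical argument applies with $K^a$ in place of $K$ if $h$ also fails to centralize $K^a$. But then $\E(h)$ would contain finite-index subgroups of both $K$ and $K^a$, together generating a copy of $\mathbb{Z}_p\oplus\mathbb{Z}_p$, contradicting procyclicity. So every $h\in H$ centralizes $K$ or $K^a$, i.e.\ $H=C_H(K)\cup C_H(K^a)$; since a group is never the proper union of two subgroups, one of $K$, $K^a$ is central in $H$, and paragraph~1 concludes (using that $K^a$ is equally a normal procyclic subgroup of $H$ with $H/K^a\cong H/K$ locally nilpotent).

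The main obstacle is the second sub-case, namely producing the contradiction between procyclicity of $\E(h)$ and the simultaneous presence of finite-index subgroups of $K$ and $K^a$ inside it. The closure argument outlined above, applied twice (once to $K$ and once to $K^a$) via Lemma~\ref{nnilpact}, is the technical core of the proof.
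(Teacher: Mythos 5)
Your proof is correct, and its skeleton coincides with the paper's: the same dichotomy on whether $K\cap K^{x}=1$ for some $x\in G$, the same appeal to Lemma~\ref{lemma_subgr} in the case where all the intersections $K\cap K^x$ are nontrivial, and the same use of the abelianness of $\mathrm{Aut}(\mathbb{Z}_p)$ to force $H=G'$ to centralize a $G$-invariant finite-index subgroup of $K$ (and hence $K$ itself). The one genuine divergence is the sub-case where $K\cap K^{a}=1$ for some $a$. There the paper simply observes that $H$ embeds into $H/K\times H/K^{a}$, a direct product of locally nilpotent groups, and is therefore locally nilpotent --- no sinks are needed. You instead invoke the procyclic-sink hypothesis: an $h\in H$ centralizing neither $K$ nor $K^{a}$ would have $\E(h)$ containing finite-index subgroups of both, hence a copy of $\mathbb{Z}_p\oplus\mathbb{Z}_p$, so that $H=C_H(K)\cup C_H(K^{a})$ and one of the two centralizers must be all of $H$. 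Your tail-and-closure computation is sound (it is essentially the calculation inside Lemma~\ref{nnilpact}, and the same device the paper uses later in Proposition~\ref{1114} and Claim~\ref{1116}), but it is heavier than the direct-product observation and consumes the hypothesis on $G$ at a point where the paper's proof of Claim~\ref{1115} needs only the inductive data about $H$. Both routes deliver the claim; the paper's handling of that sub-case is the shorter and more economical one.
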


If $K$ is finite, the claim holds. So we assume that $K$ is infinite. It is sufficient to show that $H$ has a procyclic subgroup $K_0$, which is normal in $G$, such that $H/K_0$ is locally nilpotent. Indeed, once the existence of such subgroup $K_0$ is established, observe that $K_0\leq Z(H)$ because $G/C_G(K_0)$ embeds into $\mathrm{Aut}(\mathbb{Z}_p)$ which is abelian. Hence $H$ is locally nilpotent. Thus, assume that $K$ is not normal in $G$.

For any $x\in G$ the quotient $H/K^x$ is locally nilpotent. If there exists $x$ such that $K^x\cap K=1$, then  $H$, being isomorphic to a subgroup $H/K\times H/K^x$, must be locally nilpotent, as desired.  Therefore we will assume that $K^x\cap K\not=1$ for any $x\in G$.

In view of Lemma \ref{lemma_subgr} $K$ contains a nontrivial subgroup $L$  which is normal in $G$.  Since  $H/K$ is locally nilpotent and  $L$ has finite index in $K$, it follows that $H/L$ is locally nilpotent too. Moreover, since $L$ is normal in $G$, it follows that $L$ is in the center of $H$ and  so $H$ is locally nilpotent. This establishes Claim \ref{1115}.

\begin{claim}\label{1116}   Assume  that $G$ has a normal nilpotent subgroup $M$ such that $G/M$ is nilpotent and finitely generated. Then $G$ has a normal procyclic subgroup $M_0$ such that $G/M_0$ is nilpotent.
\end{claim}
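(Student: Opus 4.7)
The plan is to adapt the argument of Proposition \ref{1114} to the metanilpotent setting, using the finite generation of $G/M$ together with Lemma \ref{3333} applied to each cyclic-quotient subgroup $M\langle a_j\rangle$, where $a_1,\dots,a_k$ are topological generators of $G$ modulo $M$.

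First, for each $j$, the subgroup $M\langle a_j\rangle$ is soluble pro-$p$ with normal nilpotent subgroup $M$ and is generated modulo $M$ by the single element $a_j$; since sinks chosen procyclic in $G$ restrict to procyclic sinks in $M\langle a_j\rangle$, Lemma \ref{3333} applies inside $M\langle a_j\rangle$ and yields a procyclic subgroup $K_j=E_{n_j}(a_j)$, normal in $M\langle a_j\rangle$, such that $\gamma_{i_j}(M\langle a_j\rangle)$ has finite index in $K_j$. In particular $K_j\leq \E(a_j)$, where $\E(a_j)$ denotes the procyclic $G$-sink of $a_j$.

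Next, I would show that the sinks $\E(a_j)$ for $j=1,\dots,k$ are mutually commensurable inside some common procyclic subgroup of $G$. Following the pattern of the proof of Proposition \ref{1114}, the key step is to argue that if $\E(a_i)\cap\E(a_j)=1$ for some pair $i\neq j$, then after replacing $\E(a_i)$ and $\E(a_j)$ by suitable normal refinements drawn from the $K_j$'s and using Lemma \ref{nnilpact}, the element $a_i$ must centralize $\E(a_j)$ and vice versa; the action of $a_ia_j$ on $\E(a_i)\oplus\E(a_j)$ then forces $\E(a_ia_j)$ to contain a subgroup of rank two, contradicting its procyclicity. Having established nontrivial pairwise intersections, the sinks can be placed inside a common procyclic container up to finite index. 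Applying Lemma \ref{lemma_subgr} to this common container (whose conjugates in $G$ have nontrivial pairwise intersections) extracts a nontrivial procyclic subgroup $M_0$ of $G$ that is normal in $G$ and has finite-index image in each $K_j$.

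Finally, to verify that $G/M_0$ is nilpotent, I would note that each generator $a_j$ becomes an Engel element modulo $M_0$ (because the image of $\E(a_j)$ in $G/M_0$ is finite), and a Baire-category argument in the spirit of Lemma \ref{1111} promotes this to a bounded Engel condition on each $a_jM_0$ inside the pro-$p$ quotient $G/M_0$. Starting from the nilpotent subgroup $MM_0/M_0$ and iterating Lemma \ref{ord1} through the finitely many generators $a_1M_0,\dots,a_kM_0$ yields the nilpotency of $G/M_0$.

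The main obstacle is the direct-sum contradiction in the second paragraph. In Proposition \ref{1114} the sinks could be taken normal in the whole group $G$ thanks to Lemma \ref{1113} and the metabelian hypothesis, but here the sinks $\E(a_j)$ are not automatically normal in $G$; one must work with the subgroups $K_j$ provided by Lemma \ref{3333}, which are normal only in $M\langle a_j\rangle$, and coordinate their $G$-conjugates through Lemma \ref{lemma_subgr} in order to recover the rank-two obstruction. Simultaneously handling $k$ sinks rather than a single pair adds further bookkeeping that must be carefully organized.
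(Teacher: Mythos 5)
There is a genuine gap: your argument only covers, in effect, the case where $G/M$ is abelian, and the mechanism you propose for the general nilpotent case does not work. The paper proceeds by a double induction on the nilpotency class of $G/M$ and on the number $s$ of generators. When $G/M$ is abelian, each $V_j=M\langle a_j\rangle$ is \emph{normal} in $G$, so the procyclic subgroups $\gamma_{i(j)}(V_j)$ supplied by Lemma \ref{3333} are normal in $G$; this normality is exactly what makes the direct-sum contradiction run (the subgroup $\gamma_{i(j)}(V_j)\oplus\gamma_{i(k)}(V_k)$ must be invariant under $a_ja_k$ for $E_n(a_ja_k)$ to contain a copy of $\mathbb{Z}_p\oplus\mathbb{Z}_p$). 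When $G/M$ has class at least $2$, the subgroups $M\langle a_j\rangle$ are no longer normal, and the paper replaces them by $W_j=\langle a_j\rangle G'M$: these contain $G'$, hence are normal in $G$, have smaller class modulo $M$, and are still finitely generated modulo $M$, so the inductive hypothesis gives normal procyclic $B_j$ with $W_j/B_j$ nilpotent; Claim \ref{1115} (local nilpotency of $G'$) is then used to see that $G'$ centralizes the infinite $B_j$ before running the direct-sum argument. You correctly identify the non-normality of the $K_j$ as the obstacle, but the proposed remedy -- ``coordinate their $G$-conjugates through Lemma \ref{lemma_subgr}'' -- is circular, since the hypothesis of that lemma ($K\cap K^x\neq 1$ for all $x$) is precisely what the direct-sum contradiction is needed to establish, and without $a_i$-invariance of $K_j$ you cannot even conclude that $a_i$ centralizes $K_j$ when the intersection is trivial.

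The concluding step is also flawed. Even granting a normal procyclic $M_0$ commensurable with every $K_j$, knowing that each \emph{generator} $a_j$ is (boundedly) Engel modulo $M_0$ does not yield nilpotency of $G/M_0$ by ``iterating Lemma \ref{ord1}'': after the first application the subgroup $M\langle a_1\rangle M_0/M_0$ is not normal in $G/M_0$ unless $[a_2,a_1]\in M\langle a_1\rangle M_0$, so Lemma \ref{ord1} cannot be applied a second time. More fundamentally, controlling the sinks of $a_1,\dots,a_k$ says nothing about elements such as $[a_1,a_2]$ or $a_1a_2$, whose sinks must also be tamed; this is what the paper's induction on the nilpotency class of $G/M$ accomplishes and what is missing from your outline.
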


Indeed, choose $a_1,\ldots, a_s$ in $G$ such that $G=\langle M,a_1,\ldots,a_s\rangle$. We argue by induction on the nilpotency class of $G/M$ and also use induction on $s$.  

Assume first that $G/M$ is abelian.  The case $s=1$ follows from Lemma \ref{3333} so suppose that $s\geq2$. Let $V_j=M\langle a_j\rangle$, for $1\leq j \leq s$. Observe that for each $j$ the subgroup  $V_j$ is normal in $G$ and, in view of Lemma \ref{3333}, there exists $i(j)$ such that $\gamma_{i(j)}(V_j)$ is procyclic. If for any $j$ the subgroup $\gamma_{i(j)}(V_j)$ is finite (or trivial), then each $V_j$ is nilpotent and so $G$ is nilpotent too. Thus we can assume that some $\gamma_{i(j)}(V_j)$ are procyclic infinite.  Moreover, if for some  $j$ and $k$ the subgroups  $\gamma_{i(j)}(V_j)$ and $\gamma_{i(k)}(V_k)$ are infinite and  satisfy $\gamma_{i(j)}(V_j)\cap\gamma_{i(k)}(V_k)=1$, then we get a contradiction. Indeed, set $N=\gamma_{i(j)}(V_j)\oplus\gamma_{i(k)}(V_k)$ and consider the action of $a_ja_k$ on $N$.  Arguing as in the proof of Proposition \ref{1114}, we see that $\E(a_ja_k)$ is not procyclic, since for any $n$ the subgroup $E_n(a_ja_k)$  contains a subgroup isomorphic to  $\Bbb Z_p\oplus\Bbb Z_p$. We therefore assume that all infinite subgroups $\gamma_{i(j)}(V_j)$ intersect pairwise nontrivially. In particular their intersection $V$ is a nontrivial normal procyclic subgroup such that $G/V$ is nilpotent. This concludes the argument in the case where $G/M$ is abelian.  

Next, suppose that $G/M$ has nilpotency class at least two, so in particular $s$ is bigger than one. Let $W_j=\langle a_j\rangle HM$, for $1\leq j \leq s$.  Note that any subgroup $W_j$ modulo $M$ is a finitely generated subgroup, since it is a subgroup of a finitely generated nilpotent group. Furthermore $W_j$ modulo $M$ has nilpotency class smaller than the nilpotency class of $G/M$, since it is generated by the image of $H$ and $a_j$. Thus, by induction, any $W_j$ has a normal procyclic subgroup $B_j$ such that $W_j/B_j$ is nilpotent. So, there exists $l(j)$ such that $\gamma_{l(j)}(W_j)\leq B_j$. As in the previous paragraph, if all $B_j$ are finite (or trivial), then $G$ is nilpotent. If the infinite $B_j$ intersect nontrivially, then the claim follows since their intersection $B$ is a nontrivial normal procyclic subgroup such that $G/B$ is nilpotent. Suppose that for some $i,j$ the subgroups $B_i$ and $B_j$ are infinite and $B_i\cap B_j=1$. Note that Claim \ref{1115} implies that both $B_i$ and $B_j$ are centralized by $H$. Set $N=B_i\oplus B_j$ and look at the action of $a_ia_j$ on $N$. We see that for any $n$ the subgroup $E_n(a_ia_j)$ contains a subgroup isomorphic to $\Bbb Z_p\oplus\Bbb Z_p$. Thus $\E(a_ia_j)$ is not procyclic, a contradiction.  This concludes the proof of Claim \ref{1116}.
\medskip

Let $R$ be the last nontrivial term of the derived series of $G$. By induction on the derived length of $G$  assume that for $G/R$ the theorem  holds. Thus $G$ has a normal subgroup $S$, containing $R$, such that $S/R$ is procyclic and $G/S$ is locally nilpotent. Obviously, we can choose $S$ in such way that $S\leq H$.  Let $a\in S$ such that $S=R\langle a\rangle$. In view of Claim \ref{1115},  $a$ is an Engel element. Thus applying Lemma \ref{1111} we deduce that $S$ is nilpotent.

\begin{claim}\label{1117} Let $a_1,\dots,a_s\in G$ and set $J=R\langle a_1,\dots,a_s\rangle$. Then $J$ has a normal procyclic subgroup $J_0$ such that $J/J_0$ is nilpotent.\end{claim}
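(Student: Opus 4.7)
The plan is to deduce the claim by a direct application of Claim \ref{1116} to $J$, so the task reduces to exhibiting a normal nilpotent subgroup of $J$ whose quotient is nilpotent and finitely generated.

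First I would verify that $J$ inherits the standing hypothesis of Theorem \ref{main}. As a closed subgroup of $G$, $J$ is itself a soluble pro-$p$ group; and for each $g\in J$ the intersection $\E(g)\cap J$ is an Engel sink of $g$ inside $J$ (all sufficiently long commutators $[x,_{\,n}g]$ with $x\in J$ lie in $J$ and eventually in $\E(g)$), while the closed subgroup it generates sits inside the procyclic $\langle \E(g)\rangle$ and is therefore procyclic. Thus Claims \ref{1115} and \ref{1116} and Lemma \ref{3333} are all available for $J$ in place of $G$.

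The natural choice is $M := J\cap S$, where $S$ is the normal nilpotent subgroup constructed in the paragraph immediately preceding the claim (so $S/R$ is procyclic, $G/S$ is locally nilpotent, and $S$ is nilpotent by Lemma \ref{1111}). Then $M$ is normal in $J$ because $S$ is normal in $G$, and $M$ is nilpotent since $M\leq S$. For the quotient one has $J/M\cong JS/S\leq G/S$. The inclusion $R\leq S$ gives $JS=S\langle a_1,\dots,a_s\rangle$, so $J/M$ is finitely generated by the images of $a_1,\dots,a_s$; as a finitely generated subgroup of the locally nilpotent group $G/S$, it is nilpotent.

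Applying Claim \ref{1116} to $J$ with this $M$ then produces the required normal procyclic subgroup $J_0$ of $J$ with $J/J_0$ nilpotent. I expect no real obstacle beyond spotting the right $M$; the only mildly delicate point is the transfer of the procyclic-sink hypothesis to the subgroup $J$, which is immediate once one uses that closed subgroups of procyclic groups are procyclic.
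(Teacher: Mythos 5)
Your proof is correct and rests on the same key lemma as the paper's, namely Claim \ref{1116}; the difference is only in how the subgroup $S$ is brought into play. The paper first disposes of the case where $J/R$ is nilpotent (applying Claim \ref{1116} to $J$ with $M=R$), and otherwise enlarges $J$ to $J_1=\langle J,a\rangle$, which contains $S$, applies Claim \ref{1116} to $J_1$ with $M=S$ (noting $J_1/S$ is finitely generated and nilpotent), and finally takes $J_0=J\cap N_0$. You instead apply Claim \ref{1116} once, directly to $J$, with $M=J\cap S$, which eliminates both the case split and the detour through $J_1$; the price is the (easy) verification that $J\cap S$ is normal and nilpotent in $J$ with $J/(J\cap S)\cong JS/S$ finitely generated and nilpotent, which you carry out. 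Both arguments depend on the same tacit point, which you rightly make explicit: Claim \ref{1116} is stated for $G$ but is applied to proper closed subgroups (the paper itself does this for $ST$, $ST_Y$ and $J_1$), and this is legitimate because the procyclic-sink hypothesis passes to closed subgroups and the needed instance of Claim \ref{1115} for such a subgroup follows from the one for $G$. Your route is slightly more economical; the paper's keeps the nilpotent normal subgroup equal to one of the distinguished subgroups $R$ or $S$ rather than an intersection.
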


If $J/R$ is nilpotent, then the claim follows from Claim \ref{1116}. Assume that $J/R$ is not nilpotent.   Set $J_1=\langle J,a\rangle$, where $a$ is as above. Note that $S\leq J_1$ and $J_1/S$ is nilpotent since $G/S$ is locally nilpotent. Hence, again by Claim \ref{1116} there exists  a normal procyclic subgroup $N_0$ in $J_1$ such that $J_1/N_0$  is nilpotent. In particular $JN_0/N_0$ is nilpotent too, so we can take $J_0=J\cap N_0$.  This  concludes the proof of Claim \ref{1117}.
\medskip

We now embark on the final part of the proof of the theorem.
 Assume that the group $G$ is not locally nilpotent. Choose elements $a_1,\dots,a_s\in G$ such that $T=\langle a_1,\dots,a_s\rangle$ is not nilpotent. Recall that $S$ is a nilpotent normal subgroup of $G$ such that $G/S$ is locally nilpotent. By Claim \ref{1116} the group $ST$ has a normal procyclic subgroup $K_0$ such that $ST/K_0$ is nilpotent. Without loss of generality we assume that there is a positive integer $i_0$ such that $K_0=\gamma_{i_0}(ST)$. Note that $K_0$ here must be infinite since $T$ is not nilpotent. Moreover we can replace $K_0$ by $S\cap K_0$ and simply assume that $K_0\leq S$. Indeed, since $ST/K_0$ and $ST/S$ are both nilpotent, we have $\gamma_{i}(ST)\leq S\cap K_0 $, for some positive integer $i$.

Given any   finite subset $Y$ of $G$, we write $T_Y$ for the subgroup $\langle Y,T\rangle$. By Claim \ref{1116} the group $ST_Y$ has a normal procyclic subgroup $K_Y$ such that $ST_{Y}/K_Y$ is nilpotent. Again there is a positive integer $i_Y$ such that $K_Y=\gamma_{i_Y}(ST_Y)$. Note that  all subgroups $K_Y$ are infinite and have infinite intersection with $K_0$. Indeed, any subgroup $ST_Y$ contains $ST$, the subgroup $ST$ is nilpotent modulo the intersection of $K_Y$ with $K_0$, so if this intersection were  trivial, then  $ST_Y$ would be nilpotent, a contradiction.  As before, since $G/S$ is locally nilpotent, we choose all $K_Y$ inside $S$.

Now choose an arbitrary element $x\in G$ and set 
$$Y(x)=\{{a_1}^x,\dots,{a_s}^x, a_1,\dots,a_s\}.$$ We see that $K_{Y(x)}$ has infinite intersection with each of the subgroups $K_0$ and $K_0^{x}$. Hence $K_0\cap K_0^x$ is nontrivial and  this holds for any choice of $x\in G$. Thus,  by Lemma \ref{lemma_subgr}, $K_0$ contains a nontrivial subgroup $L_0$ which is normal in $G$.

Note that for any choice of a finite subset $Y$ of $G$, the subgroup $L_0$ intersects  $K_Y$ by a finite index subgroup, since $K_0$ intersects $K_Y$ nontrivially and $L_0$ has finite index in $K_0$.  Therefore every subgroup $T_Y$ is nilpotent modulo $L_0$, since $K_Y$ becomes finite modulo $L_0$.  Hence $G$ is locally nilpotent modulo $L_0$ and  this concludes the proof.
\end{proof}

 \end{document}